\def\today{\ifcase \month \or
   January \or February \or March \or April \or
   May \or June \or July \or August \or
   September \or October \or November \or December \fi
   \space\number\day , \number\year}
  \newcommand\@dotsep{4.5}
  \def\@tocline#1#2#3#4#5#6#7{\relax
     \ifnum #1>\c@tocdepth 
     \else
     \par \addpenalty\@secpenalty\addvspace{#2}%
     \begingroup \hyphenpenalty\@M
     \@ifempty{#4}{%
     \@tempdima\csname r@tocindent\number#1\endcsname\relax
        }{%
         \@tempdima#4\relax
           }%
      \parindent\z@ \leftskip#3\relax \advance\leftskip\@tempdima\relax
      \rightskip\@pnumwidth plus1em \parfillskip-\@pnumwidth
       #5\leavevmode\hskip-\@tempdima #6\relax
       \leaders\hbox{$\m@th
       \mkern \@dotsep mu\hbox{.}\mkern \@dotsep mu$}\hfill
       \hbox to\@pnumwidth{\@tocpagenum{#7}}\par
       \nobreak
        \endgroup
         \fi}
\begin{document}

\makeatletter
\@addtoreset{figure}{section}
\def\thefigure{\thesection.\@arabic\c@figure}
\def\fps@figure{h,t}
\@addtoreset{table}{bsection}

\def\thetable{\thesection.\@arabic\c@table}
\def\fps@table{h, t}
\@addtoreset{equation}{section}
\def\theequation{
\arabic{equation}}
\makeatother

\newcommand{\bfi}{\bfseries\itshape}
\newtheorem{theorem}{Theorem}
\newtheorem{corollary}[theorem]{Corollary}
\newtheorem{criterion}[theorem]{Criterion}
\newtheorem{definition}[theorem]{Definition}
\newtheorem{example}[theorem]{Example}
\newtheorem{lemma}[theorem]{Lemma}
\newtheorem{notation}[theorem]{Notation}
\newtheorem{problem}[theorem]{Problem}
\newtheorem{proposition}[theorem]{Proposition}
\newtheorem{remark}[theorem]{Remark}
\numberwithin{theorem}{section}
\numberwithin{equation}{section}

\newcommand{\todo}[1]{\vspace{5 mm}\par \noindent
\framebox{\begin{minipage}[c]{0.85 \textwidth}
\tt #1 \end{minipage}}\vspace{5 mm}\par}

\renewcommand{\1}{{\bf 1}}

\newcommand{\hotimes}{\widehat\otimes}

\newcommand{\Ad}{{\rm Ad}}
\newcommand{\Alt}{{\rm Alt}\,}
\newcommand{\Ci}{{\mathcal C}^\infty}
\newcommand{\comp}{\circ}
\newcommand{\wt}{\widetilde}

\newcommand{\ph}{\text{\bf P}}
\newcommand{\conv}{{\rm conv}}
\newcommand{\de}{{\rm d}}
\newcommand{\ev}{{\rm ev}}
\newcommand{\fimes}{\mathop{\times}\limits}
\newcommand{\id}{{\rm id}}
\newcommand{\ie}{{\rm i}}
\newcommand{\End}{{\rm End}\,}
\newcommand{\Gr}{{\rm Gr}}
\newcommand{\GL}{{\rm GL}}
\newcommand{\Hilb}{{\bf Hilb}\,}
\newcommand{\Hom}{{\rm Hom}}
\renewcommand{\Im}{{\rm Im}}
\newcommand{\Ker}{{\rm Ker}\,}
\newcommand{\Lie}{\textbf{L}}
\newcommand{\lf}{{\rm l}}
\newcommand{\Loc}{{\rm Loc}\,}
\newcommand{\pr}{{\rm pr}}
\newcommand{\Ran}{{\rm Ran}\,}
\newcommand{\supp}{{\rm supp}\,}

\newcommand{\Tr}{{\rm Tr}\,}
\newcommand{\Tran}{\textbf{Trans}}

\newcommand{\CC}{{\mathbb C}}
\newcommand{\RR}{{\mathbb R}}
\newcommand{\NN}{{\mathbb N}}

\newcommand{\G}{{\rm G}}
\newcommand{\U}{{\rm U}}
\newcommand{\Gl}{{\rm GL}}
\newcommand{\SL}{{\rm SL}}
\newcommand{\SU}{{\rm SU}}
\newcommand{\VB}{{\rm VB}}

\newcommand{\Ac}{{\mathcal A}}
\newcommand{\Bc}{{\mathcal B}}
\newcommand{\Cc}{{\mathcal C}}
\newcommand{\Dc}{{\mathcal D}}
\newcommand{\Ec}{{\mathcal E}}
\newcommand{\Fc}{{\mathcal F}}
\newcommand{\Gc}{{\mathcal G}}
\newcommand{\Hc}{{\mathcal H}}
\newcommand{\Kc}{{\mathcal K}}
\newcommand{\Nc}{{\mathcal N}}
\newcommand{\Oc}{{\mathcal O}}
\newcommand{\Pc}{{\mathcal P}}
\newcommand{\Qc}{{\mathcal Q}}
\newcommand{\Rc}{{\mathcal R}}
\newcommand{\Sc}{{\mathcal S}}
\newcommand{\Tc}{{\mathcal T}}
\newcommand{\Uc}{{\mathcal U}}
\newcommand{\Vc}{{\mathcal V}}
\newcommand{\Wc}{{\mathcal W}}
\newcommand{\Xc}{{\mathcal X}}
\newcommand{\Yc}{{\mathcal Y}}
\newcommand{\Zc}{{\mathcal Z}}
\newcommand{\Ag}{{\mathfrak A}}
\renewcommand{\gg}{{\mathfrak g}}
\newcommand{\hg}{{\mathfrak h}}
\newcommand{\mg}{{\mathfrak m}}
\newcommand{\nng}{{\mathfrak n}}
\newcommand{\pg}{{\mathfrak p}}
\newcommand{\Gg}{{\mathfrak g}}
\newcommand{\Lg}{{\mathfrak L}}
\newcommand{\Sg}{{\mathfrak S}}
\newcommand{\Ug}{{\mathfrak u}}

\markboth{}{}

\makeatletter
\title[Moment convexity of solvable locally compact groups]
{Moment convexity of solvable locally compact groups}
\author{Daniel Belti\c t\u a and Mihai Nicolae}
\address{Institute of Mathematics ``Simion
Stoilow'' of the Romanian Academy,
P.O. Box 1-764, Bucharest, Romania}
\email{Daniel.Beltita@imar.ro, beltita@gmail.com}
\email{mihaitaiulian85@yahoo.com}
\date{August 20, 2014}

\keywords{solvable topological group, pro-Lie group, moment set, convexity}
\subjclass[2010]{Primary 22D10; Secondary 22A10, 22A25, 22D25, 53D20, 17B65}
\makeatother

\begin{abstract}
We introduce moment maps for continuous unitary representations of general topological groups. 
For solvable separable locally compact groups, 
we prove that the closure of the image of the moment map of any representation is convex.  
\end{abstract}

\maketitle


\section{Introduction}

For any continuous unitary representation 
$\pi\colon G\to U(\Hc)$ of a finite-dimensional Lie group 
its moment map (\cite{Wi89}, \cite{Mi90}, \cite{Wi92}) is 
$$\Psi_\pi\colon\Hc_\infty\setminus\{0\}\to\gg^*, \quad 
\Psi_\pi(v)=\frac{1}{\ie}\frac{\langle \de\pi(\cdot)v,v\rangle}{\langle v,v\rangle},$$
where $\Hc_\infty$ is the space of smooth vectors, and its moment set is 
$I_\pi:=\overline{\Psi_\pi(\Hc_\infty\setminus\{0\})}\subseteq\gg^*$, 
that is, the closure of the image of the moment map the dual space of the Lie algebra of~$G$. 
Representations of solvable Lie groups share a remarkable convexity property: 

\begin{theorem}[{\cite{AL92}}]\label{conv}
If $G$ is any solvable finite-dimensional Lie group, 
then for any continuous unitary representation $\pi\colon G\to U(\Hc)$  
its moment set $I_\pi$ is convex. 
\end{theorem}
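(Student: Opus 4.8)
The plan is to assemble $I_\pi$ out of line segments by means of a single interpolation device, fed by the orbit method. Two observations frame the problem. For each $X\in\gg$ the operator $\tfrac{1}{\ie}\de\pi(X)$ is self-adjoint with $\Hc_\infty$ as a core, so $\{\Psi_\pi(v)(X):v\in\Hc_\infty\setminus\{0\}\}$ is dense in a closed interval of $\RR$ (the convex hull of the spectrum of $\tfrac{1}{\ie}\de\pi(X)$); hence $I_\pi$ meets every line through $0$ in a closed interval. Also $\Psi_\pi(\pi(g)v)=\Ad^*(g)\Psi_\pi(v)$, so $I_\pi$ is $\Ad^*(G)$-invariant. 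Neither fact implies convexity --- a subset of $\gg^*$ with interval-shaped shadows on all lines can be far from convex --- so solvability must enter through the geometry of the coadjoint orbits.

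The engine is elementary: if $v_0,v_1\in\Hc_\infty$ are unit vectors with $\langle v_0,v_1\rangle=0$ and $\langle\de\pi(X)v_0,v_1\rangle=0$ for every $X\in\gg$, then for $v_t=\sqrt{1-t}\,v_0+\sqrt{t}\,v_1$ all cross terms vanish, $v_t$ is a unit vector, and $\Psi_\pi(v_t)=(1-t)\Psi_\pi(v_0)+t\Psi_\pi(v_1)$, so the whole segment $[\Psi_\pi(v_0),\Psi_\pi(v_1)]$ lies in $I_\pi$. I would produce such ``strongly orthogonal'' pairs in two complementary ways. (a) If $\pi$ admits mutually disjoint subrepresentations $\pi_0$ and $\pi_1$ --- for instance restrictions of a direct-integral decomposition of $\pi$ to disjoint measurable sets of parameters --- then any $v_0\in(\Hc_0)_\infty$ and $v_1\in(\Hc_1)_\infty$ qualify, since $\de\pi(X)v_0$ stays in the invariant subspace $\Hc_0\perp\Hc_1$. (b) If $\pi\cong\mathrm{Ind}_H^G\chi$ is realized on $L^2$-sections of a line bundle over $G/H$, then each $\de\pi(X)$ is a first-order differential operator along $G/H$, hence a local operator; consequently two smooth vectors represented by sections with disjoint supports in $G/H$ are automatically strongly orthogonal.

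To assemble the result, decompose $\pi\cong\int^{\oplus}\pi_\xi\,d\mu(\xi)$ into factor representations, organizing the parameter space by coadjoint quasi-orbits via the Auslander--Kostant--Pukanszky theory (this covers solvable groups beyond the type-I case). A smooth vector $v=\int^{\oplus}v_\xi\,d\mu(\xi)$ satisfies $\Psi_\pi(v)=\int\Psi_{\pi_\xi}(v_\xi)\,d\nu(\xi)$ with $\nu=\|v_\xi\|^2\|v\|^{-2}\,d\mu(\xi)$ a probability measure, and the smoothness of $v$ provides enough integrability for this barycenter to lie in $\overline{\conv}\big(\bigcup_\xi I_{\pi_\xi}\big)$; thus $I_\pi\subseteq\overline{\conv}\big(\bigcup_\xi I_{\pi_\xi}\big)$. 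For the reverse inclusion I would first show each $I_{\pi_\xi}$ is convex: model the factor $\pi_\xi$ by the orbit method as an induced representation with $\overline{I_{\pi_\xi}}$ equal to the closed convex hull of the attached quasi-orbit $\Oc$, then apply device (b) --- realize two given points of $I_{\pi_\xi}$ up to $\varepsilon$ by smooth vectors with disjoint supports in $G/H$, interpolate, and let $\varepsilon\to0$, using that $I_{\pi_\xi}$ is closed. Next, applying device (a) to restrictions of $\pi$ over shrinking neighbourhoods of the various $\xi$, one gets $\bigcup_\xi I_{\pi_\xi}\subseteq I_\pi$ together with all convex combinations of points coming from different $\xi$; combined with convexity of each $I_{\pi_\xi}$ this gives $\conv\big(\bigcup_\xi I_{\pi_\xi}\big)\subseteq I_\pi$, hence $\overline{\conv}\big(\bigcup_\xi I_{\pi_\xi}\big)\subseteq I_\pi$ since $I_\pi$ is closed. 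Therefore $I_\pi=\overline{\conv}\big(\bigcup_\xi I_{\pi_\xi}\big)$, which is convex.

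The main obstacle is the separation hidden in device (b): producing, for two prescribed points of $\overline{\conv}\,\Oc$, smooth vectors that are simultaneously concentrated on disjoint regions of $G/H$ and carry (to within $\varepsilon$) the prescribed moments. This calls for a coherent-state/WKB analysis of the induced model together with the precise structure of a polarizing subgroup, and it is here that solvability is indispensable --- a polarization whose homogeneous space is roomy enough to separate vectors in the directions that matter exists precisely because $\gg$ is solvable. A secondary, more technical obstacle is making the quasi-orbit decomposition and the neighbourhood approximation in device (a) rigorous in the non-type-I case, where one must control the semicontinuity of the field $\xi\mapsto\overline{\conv}\,\Oc_\xi$.
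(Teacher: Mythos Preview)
The paper does not prove this theorem: it is quoted in the introduction as the main result of Arnal--Ludwig \cite{AL92} and serves as the black box to which the paper's own main result (Theorem~\ref{main}) is reduced. The paper's contribution is the reduction machinery --- Propositions~\ref{ortsum}, \ref{ortinfty}, \ref{half}, \ref{other} and Corollary~\ref{red} --- which passes from an arbitrary representation of a separable locally compact solvable group to irreducible representations of a finite-dimensional solvable Lie quotient, where \cite[Th.~13]{AL92} is invoked directly.

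Your outline is, in effect, a sketch of what a proof from scratch might look like, and its reduction half (the interpolation device, direct sums, direct integrals) is sound and closely parallels the paper's Sections~3--4. The genuine gap is exactly where you locate it: step~(b), showing that for an irreducible (or factor) $\pi_\xi$ any two prescribed points of $I_{\pi_\xi}$ can be realized up to $\varepsilon$ by smooth vectors with disjoint support in $G/H$. Your phrase ``coherent-state/WKB analysis together with the precise structure of a polarizing subgroup'' names ingredients but supplies none of the argument; the assertion that solvability yields a polarization ``roomy enough to separate vectors in the directions that matter'' is precisely the content to be proved, not a step one can gesture at. There is also a subsidiary gap in your assembly: in a direct integral a single fibre has measure zero, so ``restricting $\pi$ to shrinking neighbourhoods of $\xi$'' does not by itself give $I_{\pi_\xi}\subseteq I_\pi$ without a continuity statement for the field $\xi\mapsto\pi_\xi$. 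The paper supplies exactly such a statement (Lemma~\ref{disintegr}\eqref{disintegr_item2}) and exploits it in the proof of Proposition~\ref{other}; you defer this to a ``secondary obstacle'' without resolving it.
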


Moment sets and moment maps were originally motivated by studies in symplectic geometry, 
and yet the above statement depends only on Lie theory and representation theory, 
so not on differential geometry. 
Therefore, in view of the successful Lie theory that was developed for various topological groups, 
including all connected locally compact groups
(see for instance \cite{HM07} and \cite{BCR81}), it is natural to ask the following question. 

\begin{problem}\label{probsolv}
\normalfont
Does Theorem~\ref{conv} carry over to general solvable topological groups, 
or at least to solvable pro-Lie groups? 
\end{problem}

The present paper offers an affirmative answer to the above problem 
in the case of separable locally compact groups (Theorem~\ref{main}). 
Our proof needs the locally compactness hypothesis via the Haar measure in Proposition~\ref{appr} and via the use of group $C^*$-algebras further on,  
and separability of these $C^*$-algebras is required for certain arguments involving direct integrals of representations 
in Lemma~\ref{disintegr} and the proof of Proposition~\ref{other}. 
Despite these technical restrictions, we found it useful to introduce the notion of a moment map for continuous unitary representations of arbitrary topological groups (Definition~\ref{momdef}) and to derive some of its basic properties in that general setting. 
Note that Theorem~\ref{conv} trivially holds true if $\pi$ is replaced by any representation 
of a Banach-Lie group for which $\Hc_\infty=\{0\}$ (see \cite{BNe08} for a specific example). 
We will explore elsewhere other instances when Problem~\ref{probsolv} can be answered in the affirmative and 
its relations to the method of coadjoint orbits in the spirit of \cite{Wi89}. 

\section{Preliminaries} 

\subsection*{Lie theory for topological groups} 
We use differential calculus on topological groups as developed in \cite{BCR81}; 
see also \cite{BB11} and \cite{BNic14}. 

Unless otherwise mentioned, $G$ is any topological group  
and $\Hc$ is any complex Hilbert space with its unitary group $U(\Hc):=\{T\in\Bc(\Hc)\mid TT^*=TT^*=\1\}$ 
regarded as a topological group with respect to the strong operator topology 
(i.e., the topology of pointwise convergence).  
Denoting by $\Cc(\cdot,\cdot)$ the spaces of continuous maps, 
one defines 
$$\Lg(G):=\{\gamma\in\Cc(\RR,G)\mid 
(\forall t,s\in{\mathbb R})\quad \gamma(t+s)=\gamma(t)\gamma(s)\}.$$  
This set is endowed with the topology of uniform convergence on compact subsets of~$\RR$. 
If $\varphi\colon H\to G$ is any continuous homomorphism of topological groups, 
then one defines 
$$\Lg(\varphi)\colon\Lg(H)\to\Lg(G),\quad \gamma\mapsto\varphi\circ\gamma$$
and it is easily checked that one thus obtains a functor $\Lg(\cdot)$ 
from the category of topological groups to the category of topological spaces.  

The \emph{adjoint action} of the topological group $G$ is the mapping 
\begin{equation}\label{adcont}
\Ad_G\colon G\times\Lg(G)\to\Lg(G),\quad (g,\gamma)\mapsto\Ad_G(g)\gamma:=g\gamma(\cdot)g^{-1},
\end{equation}
which is continuous \cite[Prop. 2.28]{HM07} and homogeneous in the sense that 
$\Ad_G(g)(r\cdot \gamma)=r\cdot(\Ad_G(g)\gamma)$ for all $r\in\RR$, $g\in G$, and $\gamma\in\Lg(G)$, 
where one defines 
$$(\forall r,t\in\RR)(\forall \gamma\in\Lg(G))\quad 
(r\cdot \gamma)(t):=\gamma(rt). $$
For $r=-1$ and $\gamma\in\Lg(G)$ we denote $-\gamma:=(-1)\cdot\gamma\in\Lg(G)$. 

Let an arbitrary open subset $V\subseteq G$ 
and $\Yc$ be any real locally convex space.  
If $\varphi\colon V\to\Yc$, $\gamma\in\Lg(G)$, and $g\in V$, then we denote 
\begin{equation}\label{aux4_eq1}
(D_\gamma\varphi)(g):=\lim_{t\to 0}\frac{\varphi(g\gamma(t))-\varphi(g)}{t} 
\end{equation}
if the limit in the right-hand side exists. 
Similarly for 
\begin{equation}\label{aux4_eq1_R}
(D^R_\gamma\varphi)(g):=\lim_{t\to 0}\frac{\varphi(\gamma(-t)g)-\varphi(g)}{t} 
=(D_{-\Ad_G(g^{-1})\gamma}\varphi)(g). 
\end{equation}
One defines $\Cc^1(V,\Yc)$ as the set of all functions $\varphi\in\Cc(V,\Yc)$ 
for which the function 
$$
D\varphi\colon V\times\Lg(G)\to\Yc,\quad 
(D\varphi)(g;\gamma):=(D_\gamma\varphi)(g) $$
is well defined and continuous. 
One also denotes $D\varphi=:D^1\varphi$. 

Now let $n\ge 2$ and assume the space $\Cc^{n-1}(V,\Yc)$ and the mapping 
$D^{n-1}$ have been defined. 
Then $\Cc^n(V,\Yc)$ is defined as the set of all $\varphi\in\Cc^{n-1}(V,\Yc)$ 
for which the function 
$$\begin{aligned}
D^{n}\varphi\colon 
V\times \Lg(G)\times\cdots\times\Lg(G) & \to\Yc,\\
(g;\gamma_1,\dots,\gamma_n) & \mapsto (D_{\gamma_n}(D_{\gamma_{n-1}}\cdots(D_{\gamma_1}\varphi)\cdots))(g)
\end{aligned} $$
is well defined and continuous. 

Moreover $\Ci(V,\Yc):=\bigcap\limits_{n\ge1}\Cc^n(V,\Yc)$ 
and $\Cc^\infty_0(V,\Yc)$ is the set of all $\varphi\in\Ci(V,\Yc)$ 
having compact support. 
If $\Yc=\CC$, then we write simply $\Cc^n(G):=\Cc^n(V,\CC)$    
etc.,  
for $n=1,2,\dots,\infty$. 

It will be convenient to use the notations 
$$\begin{aligned}
D_\gamma\varphi:= 
&D_{\gamma_n}(D_{\gamma_{n-1}}\cdots(D_{\gamma_1}\varphi)\cdots)\colon G\to\Yc \\
D^R_\gamma\varphi:= 
&D^R_{\gamma_n}(D^R_{\gamma_{n-1}}\cdots(D^R_{\gamma_1}\varphi)\cdots)\colon G\to\Yc
\end{aligned}$$
whenever $\gamma:=(\gamma_1,\dots,\gamma_n)\in \Lg(G)\times\cdots\times\Lg(G)$ and $\varphi\in\Cc^n(G,\Yc)$. 

\begin{definition}[{\cite[Def. 3.1]{BNic14}}]\label{topology}
\normalfont
We endow the function space $\Ci(V,\Yc)$ with the locally convex topology defined by the family of 
seminorms $p_{K_1,K_2}$ introduced as follows. 
Denote 
$$(\forall k\ge 1)\quad \Lg^k(G):=\underbrace{\Lg(G)\times\cdots\times\Lg(G)}_{k\text{ times}}.$$
For every $k\ge 1$, any compact subsets $K_1\subseteq \Lg^k(G)$ and $K_2\subseteq V$, 
and any continuous seminorm $\vert\cdot\vert$ on~$\Yc$, 
 we define 
$p^{\vert\cdot\vert}_{K_1,K_2}\colon\Ci(V,\Yc)\to[0,\infty)$ 
by  
$$p^{\vert\cdot\vert}_{K_1,K_2}(f):=
\begin{cases}
\sup\{\vert (D_\gamma f)(x)\vert\mid \gamma\in K_1,x\in K_2\} &\text{ if }K_1\ne\emptyset,\\
\sup\{\vert f(x)\vert\mid x\in K_2\} &\text{ if }K_1=\emptyset.
\end{cases}$$
For the sake of simplicity we always omit the seminorm $\vert\cdot\vert$ on~$\Yc$ 
from the above notation, by writing simply $p_{K_1,K_2}$ instead of $p^{\vert\cdot\vert}_{K_1,K_2}$.
\end{definition}

\begin{proposition}\label{ch1P2.4}
If $G$ is any topological group and $\Yc$ is any locally convex space,  
then the following assertions hold. 
\begin{enumerate}
\item\label{ch1P2.4_item1} Fix any open set $V\subseteq G$. 
If for all $k\ge 1$, compact sets $K_1\subseteq \Lg^k(G)$ and $K_2\subseteq V$, 
and continuous seminorm $\vert\cdot\vert$ on~$\Yc$ one 
defines 
$q^{\vert\cdot\vert}_{K_1,K_2}\colon\Ci(V,\Yc)\to[0,\infty)$ by 
$$q^{\vert\cdot\vert}_{K_1,K_2}(f):=
\begin{cases}
\sup\{\vert (D^R_\gamma f)(x)\vert\mid \gamma\in K_1,x\in K_2\} &\text{ if }K_1\ne\emptyset,\\
\sup\{\vert f(x)\vert\mid x\in K_2\} &\text{ if }K_1\ne\emptyset, 
\end{cases}$$
then one thus obtains a family of seminorms that determines the topology of $\Ci(V,\Yc)$ 
introduced in Definition~\ref{topology}. 
\item\label{ch1P2.4_item2} For all $g\in G$ and $\gamma\in\Lg(G)$, the operators 
$$D_\gamma,D^R_\gamma,\lambda(g),\rho(g)\colon \Ci(G,\Yc)\to \Ci(G,\Yc)$$ 
are well-defined and continuous, 
where 
$$(\lambda(g)f)(x):=f(gx)\text{ and }(\rho(g)f)(x):=f(xg)\text{ for all }x\in G\text{ and }f\in\Ci(G,\Yc).$$ 
\end{enumerate}
\end{proposition}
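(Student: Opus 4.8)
The plan is to reduce both parts to a small number of \emph{transformation rules} describing how the four operators act on the defining seminorms, together with the pointwise identity~\eqref{aux4_eq1_R} relating $D^R_\gamma$ and $D_\gamma$. I would first handle $D_\gamma$, $\lambda(g)$ and $\rho(g)$, which require only elementary manipulations. Unwinding the definitions yields the operator identities
$$D_\eta\comp D_\gamma=D_{(\gamma,\eta)},\qquad D_\eta\comp\lambda(g)=\lambda(g)\comp D_\eta,\qquad D_\eta\comp\rho(g)=\rho(g)\comp D_{\Ad_G(g^{-1})\eta},$$
the last one obtained from~\eqref{aux4_eq1} using $x\eta(t)g=(xg)\,(\Ad_G(g^{-1})\eta)(t)$. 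Iterating these, one checks that for every $n\ge1$ each of the three operators maps $\Cc^n(G,\Yc)$ into itself (the relevant $D^n$ of the transformed function being obtained from $D^nf$ or $D^{n+1}f$ by composition with continuous maps), hence each maps $\Ci(G,\Yc)$ into itself. Evaluating the seminorms of Definition~\ref{topology} then gives, for $K_1\subseteq\Lg^k(G)$,
$$p_{K_1,K_2}\comp D_\gamma=p_{\{\gamma\}\times K_1,\,K_2},\qquad p_{K_1,K_2}\comp\lambda(g)=p_{K_1,\,gK_2},\qquad p_{K_1,K_2}\comp\rho(g)=p_{\Phi_g(K_1),\,K_2g},$$
where $\Phi_g\colon\Lg^k(G)\to\Lg^k(G)$ applies $\Ad_G(g^{-1})$ in each slot; since $\Ad_G$ is continuous~\eqref{adcont} and translations are homeomorphisms, the new index sets are compact, and continuity of these three operators follows.

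The heart of the proof is a \emph{twisted multivariable version} of~\eqref{aux4_eq1_R}: for $f\in\Ci(G,\Yc)$, $x\in G$ and $\gamma=(\gamma_1,\dots,\gamma_k)\in\Lg^k(G)$,
$$(D^R_\gamma f)(x)=(D_{(\beta_k,\dots,\beta_1)}f)(x),\qquad\text{with}\quad\beta_i:=-\Ad_G(x^{-1})\gamma_i\in\Lg(G).$$
I would prove it by unwinding definitions. Induction on $k$ gives $(D^R_\gamma f)(x)=\partial_{t_k}\big|_0\cdots\partial_{t_1}\big|_0\,f\big(\gamma_1(-t_1)\cdots\gamma_k(-t_k)\,x\big)$, and the group identity $\gamma_1(-t_1)\cdots\gamma_k(-t_k)\,x=x\,\beta_1(t_1)\cdots\beta_k(t_k)$ rewrites this as the iterated derivative at the origin of the single map $\Psi\colon(t_1,\dots,t_k)\mapsto f\big(x\,\beta_1(t_1)\cdots\beta_k(t_k)\big)$. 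The analogous unwinding of $D_{(\beta_k,\dots,\beta_1)}f$ at $x$ is the iterated derivative at the origin of $(t_1,\dots,t_k)\mapsto\Psi(t_k,\dots,t_1)$, and the two coincide by the symmetry of the higher partial derivatives of $\Psi$, which is a $C^\infty$ map on a neighbourhood of $0$ in $\RR^k$ thanks to stability of the calculus of~\cite{BCR81,BNic14} under composition with smooth maps. Running the same computation on $D_{(\eta_1,\dots,\eta_n)}(D^R_\gamma f)(x)=\partial_{t_n}\big|_0\cdots\partial_{t_1}\big|_0\partial_s\big|_0\,f\big(\gamma(-s)\,x\,\eta_n(t_n)\cdots\eta_1(t_1)\big)$ shows this equals $D^{n+1}f\big(x;\eta_1,\dots,\eta_n,-\Ad_G(x^{-1})\gamma\big)$, which is well defined and jointly continuous; hence for a single $\gamma\in\Lg(G)$ the operator $D^R_\gamma$ carries $\Ci(G,\Yc)$ into itself, and $p_{K_1,K_2}\comp D^R_\gamma\le p_{K_1',K_2}$ with $K_1':=\{(\eta_1,\dots,\eta_k,-\Ad_G(x^{-1})\gamma)\mid(\eta_1,\dots,\eta_k)\in K_1,\ x\in K_2\}$ compact, which gives its continuity.

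Part~\eqref{ch1P2.4_item1} then follows from the twisted formula in both directions (one may assume $K_1\ne\emptyset$, the remaining case being trivial). Given compact $K_1\subseteq\Lg^k(G)$ and $K_2\subseteq V$, continuity of $\Ad_G$ makes $\widetilde K_1:=\{(-\Ad_G(x^{-1})\gamma_k,\dots,-\Ad_G(x^{-1})\gamma_1)\mid(\gamma_1,\dots,\gamma_k)\in K_1,\ x\in K_2\}\subseteq\Lg^k(G)$ compact, and the formula yields $q_{K_1,K_2}\le p_{\widetilde K_1,K_2}$. Conversely, since $\Ad_G(g^{-1})\comp\Ad_G(g)=\id$, identity~\eqref{aux4_eq1_R} also reads $(D_\gamma f)(x)=(D^R_{-\Ad_G(x)\gamma}f)(x)$, and iterating this exactly as above gives $(D_{(\gamma_1,\dots,\gamma_k)}f)(x)=(D^R_{(\alpha_k,\dots,\alpha_1)}f)(x)$ with $\alpha_i=-\Ad_G(x)\gamma_i$, whence $p_{K_1,K_2}\le q_{\widehat K_1,K_2}$ for a compact $\widehat K_1\subseteq\Lg^k(G)$. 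The two families of seminorms are thus mutually dominated and define the same locally convex topology on $\Ci(V,\Yc)$. I expect the main obstacle to be the twisted formula itself: making rigorous the passage from the iterated one-variable limits that \emph{define} the higher derivatives $D^R_\gamma$ and $D_\gamma$ to a genuine multivariable derivative of the single map $(t_1,\dots,t_k)\mapsto f(x\,\beta_1(t_1)\cdots\beta_k(t_k))$ — this is where one must invoke the chain rule, the composition-stability of $\Ci$, and the Schwarz symmetry theorem within the differential calculus on topological groups.
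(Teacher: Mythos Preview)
Your approach is essentially the paper's: both rest on the multivariable version of~\eqref{aux4_eq1_R} and on the compactness of the twisted index sets coming from continuity of $\Ad_G$. The paper is terser---it asserts $(D_\gamma f)(x)=(D^R_{-\gamma^x}f)(x)$ for tuples directly from the scalar identity, tacitly using the Schwarz symmetry of $D^kf$ that you correctly isolate as the crux---and it organises part~\eqref{ch1P2.4_item2} slightly differently: having established part~\eqref{ch1P2.4_item1} first, it handles $\rho(g)$ and $D^R_\gamma$ via the $q$-seminorms, obtaining the untwisted identities $q_{K_1,K_2}(\rho(g)f)=q_{K_1,K_2g}(f)$ and $q_{K_1,K_2}(D^R_\gamma f)=q_{\{\gamma\}\times K_1,K_2}(f)$, which spares the extra $\Ad_G$-twist in your $\Phi_g(K_1)$ and $K_1'$; this is purely cosmetic and your route works equally well.
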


\begin{proof}
For the proof it is convenient to denote for all $k\ge 1$, $\gamma=(\gamma_1,\dots,\gamma_k)\in\Lg^k(G)$, 
and $x\in G$, 
$$\begin{aligned}
\gamma^x:=
&(\Ad_G(x^{-1})\gamma_1,\dots,\Ad_G(x^{-1})\gamma_k),  \\
-\gamma:=
&(-\gamma_1,\dots,-\gamma_k). 
\end{aligned}$$
We now proceed with the proof of the two assertions from the statement. 

\eqref{ch1P2.4_item1}
Let $K_2\subseteq G$ and $K_1\subseteq \Lambda^k(G)$ be any compact sets and $\vert\cdot\vert$ be any continuous seminorm on $\Yc$, which will be however omitted from the notation, writing simply 
$q^{\vert\cdot\vert}_{K_1,K_2}=:q_{K_1,K_2}$. 

It follows by \eqref{aux4_eq1_R} that for all $f\in\Ci(V,\Yc)$ and $x\in V$ one has 
\begin{equation}\label{ch1P2.4_proof_eq1}
(D_{\gamma} f)(x)=(D^R_{-\gamma^x} f)(x)
\end{equation} 
hence 
$p_{K_1,K_2}(f) \le q_{K_3,K_2}(f) $ where  
$$K_3:=\{ -\gamma^x \mid \gamma\in K_1,x\in K_2 \} .$$
Also, writing \eqref{ch1P2.4_proof_eq1} as  
$ (D_{\gamma}^R f)(x)=(D_{-\gamma^{x^{-1}}} f)(x) $, one obtains
$q_{K_1,K_2}(f) \le p_{K_4,K_2}(f) $, where  
$$K_4:=\{ -\gamma^{x^{-1}} \mid \gamma\in K_1,x\in K_2 \} .$$
To conclude the proof, it remains to show that $K_3$ and $K_4$ are compact subsets of $\Lg^k(G)$. 
The set $K_3$ is the image of the compact set $K_2\times K_1$ through the map 
$$G\times \Lambda^k(G)\to \Lambda^k(G) , \quad (x,\gamma)\mapsto -\gamma^x,$$
while the set $K_4$ is the image of the compact set $K_2 \times K_1 $ 
through the map 
$$G\times \Lambda^k(G)\to \Lambda^k(G) , \quad (x,\gamma)\mapsto -\gamma^{x^{-1}}. $$
Both these maps are continuous since \eqref{adcont} is continuous. 

\eqref{ch1P2.4_item2}
For the operator $\lambda(g)\colon \Ci(G,\Yc)\to\Ci(G,\Yc)$ we use the method of proof of \cite[Prop. 3.9]{BNic14}, 
which establishes that $\lambda(g)$ is well defined.  
Moreover, for any $k\ge 1$ one has the homeomorphism
$$F_k^g\colon \Lg(G)\times\cdots\times\Lg(G)\times G\to\Lg(G)\times\cdots\times\Lg(G)\times G,\quad  
(\beta,x)\mapsto(\beta,gx). $$
Iterating \cite[Rem. 3.8]{BNic14}, one obtains $D^k(\lambda(g)f)=(D^k f)\circ F_k^g$ 
for all $f\in\Ci(G,\Yc)$, and this shows as above that for  any continuous seminorm $\vert\cdot\vert$ on $\Yc$ 
and all compact sets 
$K_2\subseteq G$ and $K_1\subseteq \Lambda^k(G)$ one has 
$p_{K_1,K_2}(\lambda(g)f)=p_{K_1,K_2'}(f)$, 
where $K_2':=gK_2$
hence the linear operator $\lambda(g)\colon \Ci(G,\Yc)\to\Ci(G,\Yc)$ is continuous. 

For the operator $\rho(g)$ one can similarly obtain 
$(D^R)^k(\rho(g)f)=((D^R)^k f)\circ \widetilde{F}_k^g$, where 
$$\widetilde{F}_k^g\colon \Lg(G)\times\cdots\times\Lg(G)\times G\to\Lg(G)\times\cdots\times\Lg(G)\times G,\quad
(\beta,x)\mapsto(\beta,xg). $$
It follows that 
$q_{K_1,K_2}(\rho(g)f)=q_{K_1,K_2''}(f)$, where $K_2'':=K_2g$, 
 for  any continuous seminorm $\vert\cdot\vert$ on $\Yc$,  
all compact sets 
$K_2\subseteq G$ and $K_1\subseteq \Lambda^k(G)$, 
and any $f\in\Ci(G,\Yc)$. 
Since the seminorms $q_{K_1,K_2}$ determine the topology of $\Ci(G,\Yc)$ by Assertion~\eqref{ch1P2.4_item1}, 
we see that the linear operator $\rho(g)\colon \Ci(G,\Yc)\to\Ci(G,\Yc)$ is continuous. 

If $\gamma\in\Lg(G)$ then for every $f\in\Ci(G,\Yc)$ one has 
$p_{K_1,K_2}(D_\gamma f)=p_{K_1',K_2}(f)$ and $q_{K_1,K_2}(D^R_\gamma f)=q_{K_1',K_2}(f)$, where $K_1':=\{\gamma\}\times K_1\subseteq \Lg^{k+1}(G)$, 
with the convention $\{\gamma\}\times\emptyset=\{\gamma\}$. 
This shows that both the linear operators $D_\gamma,D^R_\gamma\colon \Ci(G,\Yc)\to\Ci(G,\Yc)$ are continuous, 
and the proof ends. 
\end{proof}

In the following we use \emph{pro-Lie groups} (that is, topological groups that are isomorphic to limits of some projective systems of Lie groups) and their Lie theory, for which we refer to the excellent monograph \cite{HM07}. 
Pro-Lie groups were called LP-groups in \cite{BCR81}, \cite{Bos76}. 

\begin{lemma}[Yamabe's theorem]\label{lc-pro}
Connected locally compact groups are pro-Lie groups. 
\end{lemma}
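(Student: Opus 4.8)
The plan is to derive this from the structure theory of locally compact groups together with the characterization of pro-Lie groups given in \cite{HM07}. First I would recall the classical Gleason--Yamabe theorem (a consequence of the solution to Hilbert's fifth problem): every connected locally compact group $G$ has arbitrarily small compact normal subgroups $N$ such that the quotient $G/N$ is a Lie group. Equivalently, $G$ has a neighborhood basis at the identity consisting of such $N$, so that $G$ is an approximately Lie group in the sense that it is an ``almost connected'' group with small compact normal subgroups.

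Next, I would invoke the intrinsic characterization of pro-Lie groups from \cite{HM07}: a topological group is a pro-Lie group precisely when it is complete and every identity neighborhood contains a closed normal subgroup $N$ with $G/N$ a (finite-dimensional) Lie group; equivalently, $G$ is the projective limit of the system $\{G/N\}$ over the filtered family of such $N$. So the remaining work is to check the two hypotheses of this criterion for a connected locally compact $G$. Completeness is immediate: a locally compact group is complete in its two-sided uniformity. The existence of the required closed normal subgroups is exactly the content of Gleason--Yamabe in the form stated above, since a compact normal subgroup is in particular closed.

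Finally I would assemble these: given any identity neighborhood $V$, Gleason--Yamabe produces a compact normal $N\subseteq V$ with $G/N$ a Lie group; the family of all such $N$ is filtered downward (intersections of finitely many still work, using that a closed subgroup of a Lie group is Lie and that the diagonal embedding into a finite product identifies $G/(N_1\cap\cdots\cap N_k)$ with a closed subgroup of $\prod G/N_i$); and $G$ is isomorphic as a topological group to $\varprojlim G/N$ because $G$ is complete and $\bigcap_N N=\{e\}$. Hence $G$ is a pro-Lie group.

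The main obstacle is not in this assembly, which is routine once the right statement of the criterion is in hand, but in the fact that the whole argument rests on the Gleason--Yamabe structure theorem, whose proof (Hilbert's fifth problem) is deep; here, however, we are entitled to quote it, and indeed \cite{HM07} develops pro-Lie theory precisely so that this lemma becomes a packaging statement. So the only genuine care needed is to cite the correct form of the pro-Lie characterization from \cite{HM07} and to note that ``connected'' (or more generally ``almost connected'') is what makes the small subgroups available.
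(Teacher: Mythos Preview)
Your proposal is correct and follows essentially the same route as the paper's proof: both invoke the Gleason--Yamabe structure theorem (arbitrarily small compact normal subgroups with Lie quotient, cited in the paper from Montgomery--Zippin), note that locally compact groups are complete, and then apply the characterization of pro-Lie groups from \cite{HM07} (the paper cites it as Th.~3.39). Your additional remarks about the filtered family of normal subgroups and the identification $G\cong\varprojlim G/N$ simply unpack what that characterization theorem already contains, so the argument is the same in substance.
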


\begin{proof}
This follows by \cite[Th. 3.39 (Def. A $\Rightarrow$ Def. B)]{HM07}, since 
every locally compact group is complete \cite[Rem. 1.31]{HM07} and 
every connected locally compact group contains arbitrarily small co-Lie subgroups 
by \cite[Th. 4.6, pag. 175]{MZ55}. 
\end{proof}

\begin{lemma}\label{pro-pre}
Every pro-Lie group $G$ is a pre-Lie group, in the sense that: 
\begin{enumerate}
\item The topological space $\Lg(G)$ has the structure of a locally convex Lie algebra over~$\RR$, 
whose scalar multiplication, vector addition and bracket 
satisfy the following conditions for all 
$t,s\in{\mathbb R}$ and $\gamma_1,\gamma_2\in\Lg(G)$: 
\begin{eqnarray}
\nonumber
 (t\cdot \gamma_1)(s) &=& \gamma_1(ts);  \\
\nonumber
(\gamma_1+\gamma_2)(t) &=& \lim\limits_{n\to\infty}(\gamma_1(t/n)\gamma_2(t/n))^n;\\
\nonumber
[\gamma_1,\gamma_2](t^2) &=& \lim\limits_{n\to\infty}(\gamma_1(t/n)\gamma_2(t/n)\gamma_1(-t/n)\gamma_2(-t/n))^{n^2}, 
\end{eqnarray}
where the convergence is  uniform on compact subsets of $\RR$. 
\item 
For every nontrivial $\gamma\in\Lg(G)$ there exists a function $\varphi$ of class $\Ci$ on 
some neighborhood of $\1\in G$ with $(D_\gamma\varphi)(\1)\ne0$.  
\end{enumerate} 
\end{lemma}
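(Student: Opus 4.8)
The plan is to reduce everything to the finite-dimensional Lie group case by exploiting the projective limit structure of a pro-Lie group $G$. Write $G=\varprojlim_{j\in J} G_j$ as a limit of finite-dimensional Lie groups with limit maps $q_j\colon G\to G_j$, and note that by functoriality of $\Lg(\cdot)$ one gets induced maps $\Lg(q_j)\colon\Lg(G)\to\Lg(G_j)$. For a finite-dimensional Lie group $H$, the space $\Lg(H)$ is canonically identified with the Lie algebra $\mathfrak{h}$ via $\gamma\mapsto\dot\gamma(0)$, with $t\cdot\gamma$ corresponding to scalar multiplication, and with the Trotter and commutator product formulas
$$(X+Y)=\lim_{n\to\infty}\Bigl(\exp(X/n)\exp(Y/n)\Bigr)^n,\qquad [X,Y]=\lim_{n\to\infty}\Bigl(\exp(X/n)\exp(Y/n)\exp(-X/n)\exp(-Y/n)\Bigr)^{n^2}$$
holding as standard facts. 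The first step is therefore to recall (from \cite{HM07}) that $\Lg(G)=\varprojlim_j\mathfrak{g}_j$ as topological vector spaces, that this carries a natural locally convex Lie algebra structure making each $\Lg(q_j)$ a continuous Lie algebra homomorphism, and that the topology of uniform convergence on compact subsets of $\RR$ on $\Lg(G)$ coincides with the projective limit topology. Granting this, scalar multiplication, addition, and bracket on $\Lg(G)$ are defined componentwise, and the three displayed formulas for $t\cdot\gamma$, $\gamma_1+\gamma_2$, $[\gamma_1,\gamma_2]$ follow by checking them in each $G_j$ (where they are the classical formulas) and using that a net in $G=\varprojlim G_j$ converges iff all its images in the $G_j$ converge, together with the fact that the limits on the right-hand sides are already known to exist in $G$ by \cite{HM07}.

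For part (2), suppose $\gamma\in\Lg(G)$ is nontrivial. Then $\gamma$ is not identically the constant $\1$, so $\dot\gamma(0)\neq 0$ in the sense that its image $\Lg(q_j)\gamma$ is a nonzero element of $\mathfrak{g}_j$ for some $j\in J$; this uses that $\Lg(G)=\varprojlim\mathfrak{g}_j$ separates points, i.e., $\bigcap_j\Ker\Lg(q_j)=\{0\}$. Fix such a $j$ and set $X_j:=\Lg(q_j)\gamma\in\mathfrak{g}_j\setminus\{0\}$. On the finite-dimensional Lie group $G_j$ there is a smooth function $\psi$ on a neighborhood of $\1\in G_j$ with $(D_{X_j}\psi)(\1)\neq 0$ — for instance take $\psi$ to be a coordinate function of the exponential chart pairing nontrivially with $X_j$, or more invariantly a smooth function whose differential at $\1$ does not annihilate $X_j$. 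Then $\varphi:=\psi\circ q_j$ is of class $\Ci$ on a neighborhood of $\1\in G$: this requires knowing that $q_j\colon G\to G_j$ is a morphism of pre-Lie-type objects under which $\Ci$ functions pull back to $\Ci$ functions and directional derivatives transform by $D_\gamma(\psi\circ q_j)=(D_{\Lg(q_j)\gamma}\psi)\circ q_j$, which is the chain rule in the $\Cc^k$-calculus of \cite{BCR81}/\cite{BNic14} applied to the continuous homomorphism $q_j$. Evaluating at $\1$ gives $(D_\gamma\varphi)(\1)=(D_{X_j}\psi)(\1)\neq 0$, as desired.

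The main obstacle — and the place where genuine work or a careful citation is needed — is the identification in the first step: that the abstractly defined operations (the $\RR$-vector space and Lie algebra structure on $\Lg(G)$ coming from \cite{HM07}) actually satisfy the three explicit limit formulas, with the limits taken in $G$ itself and uniform on compact subsets of $\RR$, and that the resulting topology is the one of uniform convergence on compacta. In the finite-dimensional case these are the Trotter product formula and the Lie commutator formula, which are classical; the content here is purely the passage to the projective limit, so I expect this to be handled by citing the relevant structure theory of $\Lg(G)$ for pro-Lie groups in \cite{HM07} (the Lie algebra functor on pro-Lie groups and its compatibility with projective limits) rather than by a direct computation. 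A secondary, more routine point to be careful about is the chain rule $D_\gamma(\psi\circ q_j)=(D_{\Lg(q_j)\gamma}\psi)\circ q_j$ in part (2): it follows immediately from the definition \eqref{aux4_eq1} of $D_\gamma$ and the identity $q_j(g\gamma(t))=q_j(g)\,(\Lg(q_j)\gamma)(t)$, so no serious difficulty arises there.
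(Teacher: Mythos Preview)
Your proposal is correct and fleshes out precisely the kind of argument one would expect: reduce to the finite-dimensional case via the projective limit description $G=\varprojlim G_j$, invoke the classical Trotter and commutator formulas in each $G_j$, and for part~(2) pull back a smooth coordinate function through some $q_j$ that does not kill $\gamma$. The paper, however, gives no argument at all: its entire proof is the one-line citation ``See \cite[Prop.~1.3.1(3)]{BCR81}.'' So there is nothing to compare at the level of strategy; your sketch is essentially a reconstruction of what that reference contains, and the points you flag as needing care (compatibility of the projective-limit topology on $\Lg(G)$ with uniform convergence on compacta, and the chain rule for $D_\gamma$ under a continuous homomorphism) are exactly the ingredients that \cite{BCR81} supplies.
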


\begin{proof}
See \cite[Prop. 1.3.1(3)]{BCR81}.
\end{proof}

\subsection*{Smooth vectors for representations of topological groups}
By \emph{representation} we mean a continuous unitary representation,  
hence a group homomorphism $\pi\colon G\to U(\Hc)$
for which the map $G\times\Hc\to\Hc$, $(g,y)\mapsto\pi(g)y $ 
is continuous. 

The space of smooth vectors has been widely used in representation theory of finite-dimensional Lie groups; 
see also \cite{Ne10} for a deep study of smooth vectors of representations of infinite-dimensional Lie groups. 
This space was introduced in representation theory of general topological groups in  \cite{Bos76} 
(see also \cite{BB11} for more versions of this notion). 

\begin{definition}\label{smvect}
\normalfont 
The space of \emph{smooth vectors} for the representation $\pi$ is
$$
\Hc_\infty:=
\{v\in\Hc\mid \pi(\cdot)v\in\Ci(G,\Hc)\}.
$$
One endows $\Hc_\infty$ with the locally convex topology for which the injective linear map 
\begin{equation}\label{smvect_eq1}
A\colon \Hc_\infty\to \Ci(G,\Hc),\quad v\mapsto\pi(\cdot)v
\end{equation}
is a homeomorphism onto its image, where the image of the above map is regarded as a subspace of the locally convex space $\Ci(G,\Hc)$ endowed with the topology introduced in Definition~\ref{topology}. 
\end{definition}

Before continuing we note that if $G$ is a Banach-Lie group, then the above topology on the space of smooth vectors is in general different from the topology introduced in \cite[Sect. 4]{Ne10}, 
however these topologies agree in the case of finite-dimensional Lie groups. 

\begin{remark}\label{top}
\normalfont
We record for later use the following easy observations. 
\begin{enumerate}
\item\label{top_item1} $\Ran A=\{\psi\in\Ci(G,\Hc)\mid (\forall x,y\in G)\ \psi(xy)=\pi(x)\psi(y)\}$ 
and this is a closed linear subspace of $\Ci(G,\Hc)$. 
\item\label{top_item2} For every $\psi\in\Ran A$ and $y\in G$ one has $\psi(y)\in\Hc_\infty$, and $\psi(\1)=A^{-1}(\psi)$. 
\item\label{top_item3} The set $\Hc_\infty$ is a 
linear subspace of $\Hc$ that is invariant under $\pi(x)$ and $\de\pi(\gamma)$ 
for all $x\in G$ and $\gamma\in\Lg(G)$, and the operators $\pi(x),\de\pi(\gamma)\colon \Hc_\infty\to\Hc_\infty$ 
are continuous (using Proposition~\ref{ch1P2.4}). 
\item\label{top_item4} The inclusion map $\Hc_\infty\hookrightarrow\Hc$ is continuous since it is equal to the composition of $A$ with the evaluation map 
$\Ci(G,\Hc)\to\Hc$, $\psi\mapsto\psi(\1)$, which is continuous. 
\item\label{top_item5} The operator $A$ intertwines the representation  $\pi(\cdot)\vert_{\Hc_\infty}$ 
and the right regular representation of $G$ on $\Ci(G,\Hc)$, 
that is, $(A(\pi(x)v))(y)=(Av)(yx)$ for all $x,y\in G$ and $v\in\Hc$. 
For $\gamma\in\Lg(G)$ and $x=\gamma(t)$ with $t\in\RR$, 
this implies $(A(\de\pi(\gamma)v))(y)=(D_\gamma(Av))(y)$   
(see \eqref{aux4_eq1}), that is, $A(\de\pi(\gamma)v)=D_\gamma(Av)$. 
\end{enumerate}
\end{remark}

\begin{remark}\label{intert}
\normalfont
Let $\pi_j\colon G\to U(\Hc_j)$ be any representation with its corresponding space of smooth vectors 
$\Hc_{j,\infty}$ for $j=1,2$. 
If $T\colon \Hc_1\to\Hc_2$ is a bounded linear operator with the property 
$T\pi_1(x)=\pi_2(x)T$ for all $x\in G$, then $T(\Hc_{1,\infty})\subseteq \Hc_{2,\infty}$. 
\end{remark}

\begin{definition}\label{deriv_repr}
\normalfont
{\it The derivative of the representation} $\pi\colon G\to U(\Hc)$ is  
$$\de\pi\colon\Lg(G)\to\End(\Hc_\infty),\quad 
\de\pi(\gamma)y:=\lim_{t\to 0}\frac{\pi(\gamma(t))y-y}{t}  $$
for every $\gamma\in\Lg(G)$ and $y\in\Hc_\infty$. 
Moreover, for every $k\ge 1$ and $\gamma=(\gamma_1,\dots,\gamma_k)\in\Lg^k(G)$ we define the linear operator
$$\de\pi(\gamma)\colon\Hc_\infty\to\Hc_\infty,\quad 
\de\pi(\gamma)y:=\de\pi(\gamma_1)\cdots\de\pi(\gamma_k)y. $$
\end{definition}

\begin{lemma}\label{unifcomp}
Let $G$ be any topological group and $\pi\colon G\to U(\Hc)$ be any unitary representation. 
Then for every integer $k\ge 1$, compact set $K\subseteq\Lg^k(G)$, and vector $v\in\Hc_\infty$, 
one has 
\begin{equation}\label{unifcomp_eq1}
\lim\limits_{x\to\1}\sup\limits_{\gamma\in K}\Vert\de\pi(\gamma)v-\de\pi(\Ad_G(x)\gamma)v\Vert=0,
\end{equation} 
where $\Ad_G\colon G\times \Lg^k(G)\to \Lg^k(G)$ is defined componentwise. 
\end{lemma}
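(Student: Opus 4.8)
The plan is to use the explicit product formulas for the Lie algebra operations on $\Lg(G)$ from Lemma~\ref{pro-pre} together with the fact that $v$ is a smooth vector, so that $x\mapsto\pi(x)v$ is a $\Ci$-map and hence $Av=\pi(\cdot)v$ lies in $\Ci(G,\Hc)$. The key point is that, by Remark~\ref{top}(\ref{top_item5}), one has $A(\de\pi(\gamma)v)=D_\gamma(Av)$ for every tuple $\gamma\in\Lg^k(G)$, so the map $\gamma\mapsto\de\pi(\gamma)v$ is, up to the homeomorphism $A$, nothing but the restriction to the orbit map $Av$ of the continuous map $D^k(Av)\colon G\times\Lg^k(G)\to\Hc$ composed with evaluation at $\1\in G$; in particular $\gamma\mapsto\de\pi(\gamma)v\in\Hc$ is continuous. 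Since the norm on $\Hc$ is (the coarsest one among) the relevant seminorms via the embedding $\Hc_\infty\hookrightarrow\Hc$ of Remark~\ref{top}(\ref{top_item4}), the quantity $\sup_{\gamma\in K}\Vert\de\pi(\gamma)v-\de\pi(\Ad_G(x)\gamma)v\Vert$ is a supremum over the compact set $K$ of a jointly continuous function of $(x,\gamma)$.

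First I would set up the joint continuity precisely. Consider the map
$$
\Phi\colon G\times\Lg^k(G)\to\Hc,\qquad \Phi(x,\gamma):=\de\pi(\gamma)v-\de\pi(\Ad_G(x)\gamma)v.
$$
Using $A(\de\pi(\gamma)v)=(D^k(Av))(\1;\gamma)$ and the continuity of $(D^k(Av))\colon G\times\Lg^k(G)\to\Hc$ (which holds because $Av\in\Ci(G,\Hc)$, by the very definition of the class $\Ci$), the map $\gamma\mapsto\de\pi(\gamma)v$ is continuous $\Lg^k(G)\to\Hc$. Composing with the continuous componentwise adjoint action $\Ad_G\colon G\times\Lg^k(G)\to\Lg^k(G)$ of \eqref{adcont}, the map $(x,\gamma)\mapsto\de\pi(\Ad_G(x)\gamma)v$ is continuous on $G\times\Lg^k(G)$; hence $\Phi$ is continuous. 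Moreover $\Phi(\1,\gamma)=0$ for every $\gamma$, since $\Ad_G(\1)=\id$.

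Then I would conclude by a standard compactness argument: fix $\varepsilon>0$; for each $\gamma\in K$, continuity of $\Phi$ at $(\1,\gamma)$ gives open neighborhoods $U_\gamma\ni\1$ in $G$ and $W_\gamma\ni\gamma$ in $\Lg^k(G)$ with $\Vert\Phi(x,\gamma')\Vert<\varepsilon$ for $(x,\gamma')\in U_\gamma\times W_\gamma$. The sets $\{W_\gamma\}_{\gamma\in K}$ cover the compact $K$, so finitely many $W_{\gamma_1},\dots,W_{\gamma_m}$ suffice; put $U:=\bigcap_{i=1}^m U_{\gamma_i}$, an open neighborhood of $\1$. Then for every $x\in U$ and every $\gamma\in K$ we have $\gamma\in W_{\gamma_i}$ for some $i$ and $x\in U\subseteq U_{\gamma_i}$, so $\Vert\Phi(x,\gamma)\Vert<\varepsilon$; taking the supremum over $\gamma\in K$ and letting $\varepsilon\to0$ yields \eqref{unifcomp_eq1}.

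The main obstacle is purely the joint-continuity bookkeeping: one must be careful that ``$\de\pi(\gamma)v$ depends continuously on the tuple $\gamma$'' really does follow from $v\in\Hc_\infty$, i.e.\ that the iterated derivative $D^k(Av)$ is continuous as a function of its $\Lg(G)$-arguments and not merely separately continuous — but this is exactly what membership in $\Ci(G,\Hc)$ guarantees by construction of the classes $\Cc^n$, so no genuinely new estimate is needed. Everything else is the elementary ``continuous function on a compact set, vanishing at the basepoint'' argument above; note in particular that $G$ need not be locally compact or pro-Lie here, so none of the product formulas of Lemma~\ref{pro-pre} are actually required.
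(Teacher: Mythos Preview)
Your argument is correct and follows essentially the same route as the paper: both reduce to the continuity of $(x,\gamma)\mapsto (D^k(Av))(\1,\Ad_G(x)\gamma)$ obtained from $Av\in\Ci(G,\Hc)$ and the continuity of the adjoint action, and then pass from joint continuity to uniform continuity over the compact set~$K$. The only difference is that the paper invokes \cite[Th.~1]{Fo45} for this last step (continuity of $x\mapsto\varphi(x,\cdot)\vert_K$ into $\Cc(K,\Hc)$), whereas you supply the underlying tube-lemma/finite-subcover argument explicitly; your remark that Lemma~\ref{pro-pre} is not actually needed is also in line with the paper's proof, which makes no use of it.
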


\begin{proof}
For $\gamma=(\gamma_1,\dots,\gamma_k)\in\Lg^k(G)$ one has 
$$\begin{aligned}
\de\pi(\gamma)v-\de\pi(\Ad_G(x)\gamma)v
=
&\de\pi(\gamma_1)\cdots\de\pi(\gamma_k)v-\de\pi(\Ad_G(x)\gamma_1)\cdots\de\pi(\Ad_G(x)\gamma_k)v 
\end{aligned}$$
hence by Remark~\ref{top}\eqref{top_item5}, 
$$A(\de\pi(\gamma)v-\de\pi(\Ad_G(x)\gamma)v)=D^k_\gamma(Av)-D^k_{\Ad_G(x)\gamma}(Av).$$
Therefore \eqref{unifcomp_eq1} is equivalent to the fact that for $f:=Av\in\Ci(G,\Hc)$ one has 
$$\lim\limits_{x\to\1}\sup\limits_{\gamma\in K}\Vert (D^k_\gamma f-D^k_{\Ad_G(x)\gamma} f)(\1)\Vert=0,$$
that is, 
$$\lim\limits_{x\to\1}\sup\limits_{\gamma\in K}\Vert (D^kf)(\1,\gamma)-D^kf(\1,\Ad_G(x)\gamma)\Vert=0. $$
To prove the above equality, 
define 
$$\varphi\colon G\times\Lg^k(G)\to\Lg^k(G),\quad 
\varphi(x,\gamma):=D^kf(\1,\Ad_G(x)\gamma).$$
Since $f\in\Ci(G,\Hc)$, the function $D^kf(\1,\cdot)\colon\Lg^k(G)\to\Hc$ is continuous, 
hence composing it with the continuous map $\Ad_G\colon G\times\Lg^k(G)\to\Lg^k(G)$, 
one obtains that $\varphi$ is continuous. 
Then the map $G\to\Cc(K,\Hc)$, $x\mapsto \varphi(x,\cdot)\vert_K$ is continuous by \cite[Th. 1]{Fo45}, 
and we are done. 
\end{proof}

We will need the following result which might be regarded as a very simple version of the Dixmier-Malliavin theorem \cite{DM78}, 
saying that the G\aa rding space is equal to the space of smooth vectors for all unitary representations of finite-dimensional Lie groups. 
Throughout the following, by \emph{smooth $\delta$-family} on a locally compact group $G$ we mean 
a family of functions $\{f_W\}_{W\in\Wc}$, where $\Wc$ is a neighboorhood base at $\1\in G$, 
and for every $W\in\Wc$ one has  $\supp f_W\subseteq W$, $0\le f_W\in\Ci(G)$, 
and $\int_G f_W(x)\de x=1$.

\begin{proposition}\label{appr}
Let $G$ be any locally compact group $G$ and $\pi\colon G\to U(\Hc)$ be any representation
extended to 
$$\pi\colon L^1(G)\to\Bc(G),\quad \pi(f)=\int_G f(x)\pi(x)\de x. $$ 
If $f\in\Cc^\infty_0(G)$ and $v\in\Hc$, then $\pi(f)v\in\Hc_\infty$ and 
\begin{equation}\label{appr_eq1}
(\forall\gamma\in\Lg(G))\quad \de\pi(\gamma)(\pi(f)v)=\pi(D^R_\gamma f)v. 
\end{equation}
Moreover, for every smooth $\delta$-family $\{f_W\}_{W\in\Wc}$  
the set 
$\{\pi(f_W)v\mid  W\in\Wc,\ v\in\Hc\} $
is a dense subset of $\Hc_\infty$. 
\end{proposition}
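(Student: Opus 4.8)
The plan is to establish the three assertions in turn, relying on the translation action of $G$ on the function spaces $\Ci(G,\cdot)$ and on the intertwining operator $A$ from Definition~\ref{smvect}. First I would show $\pi(f)v\in\Hc_\infty$ for $f\in\Cc^\infty_0(G)$ and $v\in\Hc$. The key computation is that for any $\gamma\in\Lg(G)$ and $t\in\RR$,
$$\pi(\gamma(t))\pi(f)v=\int_G f(x)\pi(\gamma(t)x)v\,\de x=\int_G f(\gamma(-t)x)\pi(x)v\,\de x=\pi(\lambda(\gamma(-t))f)v,$$
using left-invariance of Haar measure (here I am assuming $G$ unimodular issues do not intervene because we only translate the argument of $f$ on the left, which is exactly the left regular action). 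Differentiating in $t$ at $t=0$, and using that $t\mapsto\lambda(\gamma(-t))f$ is a smooth $\Cc^\infty_0(G)$-valued curve with derivative $D^R_\gamma f$ (this is precisely formula \eqref{aux4_eq1_R}, noting that $\lambda(\gamma(-t))f$ evaluated at $g$ is $f(\gamma(-t)g)$), one gets that the limit defining $\de\pi(\gamma)(\pi(f)v)$ exists and equals $\pi(D^R_\gamma f)v$, which is \eqref{appr_eq1}. To upgrade from "first derivative exists" to "$\pi(f)v\in\Hc_\infty$", I would iterate: since $D^R_\gamma f\in\Cc^\infty_0(G)$ again, the same argument applies to all higher iterated derivatives $D^R_{\gamma_k}\cdots D^R_{\gamma_1}$, showing $\pi(\cdot)(\pi(f)v)$ lies in every $\Cc^n(G,\Hc)$; continuity of the relevant derivative maps in $(g;\gamma_1,\dots,\gamma_n)$ follows because $(g,\gamma)\mapsto\pi(g)\pi(D^R_\gamma\cdots f)v$ is continuous (continuity of $\gamma\mapsto D^R_\gamma f$ into $\Cc^\infty_0(G)$ by Proposition~\ref{ch1P2.4}, continuity of $\pi$ on $L^1(G)$, and joint continuity of the representation).

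For the density statement, let $v_0\in\Hc_\infty$ be arbitrary and fix a smooth $\delta$-family $\{f_W\}_{W\in\Wc}$. The natural candidates converging to $v_0$ are $\pi(f_W)v_0$. In the Hilbert space norm one has the standard estimate
$$\Vert\pi(f_W)v_0-v_0\Vert\le\int_G f_W(x)\Vert\pi(x)v_0-v_0\Vert\,\de x\le\sup_{x\in W}\Vert\pi(x)v_0-v_0\Vert\to 0,$$
but density must be proved in the finer topology of $\Hc_\infty$, i.e.\ via $A$: one must show $A(\pi(f_W)v_0)\to A(v_0)=\pi(\cdot)v_0$ in $\Ci(G,\Hc)$, which means $p_{K_1,K_2}(A(\pi(f_W)v_0)-Av_0)\to 0$ for all compact $K_1\subseteq\Lg^k(G)$, $K_2\subseteq G$. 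By Remark~\ref{top}\eqref{top_item5} and \eqref{appr_eq1}, $D^k_\gamma(A(\pi(f_W)v_0))=A(\de\pi(\gamma)\pi(f_W)v_0)$ evaluated pointwise equals $\pi(g)\pi((D^R)^k_\gamma f_W)v_0$, while $D^k_\gamma(Av_0)(g)=\pi(g)\de\pi(\gamma)v_0=\pi(g)\pi(f_W\text{-average of }\de\pi(\gamma)v_0)$... more cleanly: I would push the derivatives onto $v_0$ instead of onto $f_W$. Since $v_0$ is smooth, one has $\pi(f_W)\de\pi(\gamma)v_0=\de\pi(\gamma)\pi(f_W)v_0$ — indeed, for $v_0\in\Hc_\infty$,
$$\pi(\gamma(t))\pi(f_W)v_0=\int_G f_W(x)\pi(x)\pi(\gamma(t))v_0\,\de x\quad\text{after the substitution }x\mapsto x\gamma(-t)\text{, up to a modular factor,}$$
so care with the modular function is needed; alternatively, from \eqref{appr_eq1} and the fact that $\pi(f_W)$ commutes with right translations only approximately, I would instead directly estimate
$$\pi(g)\de\pi(\gamma)\pi(f_W)v_0-\pi(g)\de\pi(\gamma)v_0=\pi(g)\bigl(\pi(f_W)\de\pi(\gamma)v_0-\de\pi(\gamma)v_0\bigr)$$
once the intertwining $\pi(f_W)\de\pi(\gamma)v_0=\de\pi(\gamma)\pi(f_W)v_0$ is justified for \emph{smooth} $v_0$ (where the substitution argument does go through cleanly because $\de\pi(\gamma)v_0$ exists as an honest limit and $\pi(f_W)$ is bounded). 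Then
$$\sup_{\gamma\in K_1,\,g\in K_2}\Vert\pi(g)(\pi(f_W)\de\pi(\gamma)v_0-\de\pi(\gamma)v_0)\Vert=\sup_{\gamma\in K_1}\Vert\pi(f_W)\de\pi(\gamma)v_0-\de\pi(\gamma)v_0\Vert,$$
and one bounds this by $\sup_{\gamma\in K_1}\sup_{x\in W}\Vert\pi(x)\de\pi(\gamma)v_0-\de\pi(\gamma)v_0\Vert$. The latter tends to $0$ as $W$ shrinks, \emph{uniformly in} $\gamma\in K_1$, which is exactly what Lemma~\ref{unifcomp} is designed to give (it controls $\sup_{\gamma\in K}\Vert\de\pi(\gamma)v-\de\pi(\Ad_G(x)\gamma)v\Vert$, and by strong continuity of $\pi$ together with this uniform control one gets $\sup_{\gamma\in K}\Vert\pi(x)\de\pi(\gamma)v_0-\de\pi(\gamma)v_0\Vert\to 0$; more directly, applying the map $x\mapsto\pi(x)w$ continuously and using a compactness-uniformity argument on $\{\de\pi(\gamma)v_0:\gamma\in K_1\}$, which is a compact subset of $\Hc$ as the continuous image of $K_1$).

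The main obstacle is this last uniformity: showing $\sup_{\gamma\in K_1}\Vert\pi(x)\de\pi(\gamma)v_0-\de\pi(\gamma)v_0\Vert\to 0$ as $x\to\1$. Pointwise in $\gamma$ this is just strong continuity of $\pi$, but uniformity over the compact set $K_1$ requires knowing that $\{\de\pi(\gamma)v_0\mid\gamma\in K_1\}$ is compact (equivalently totally bounded) in $\Hc$ — which holds because $\gamma\mapsto\de\pi(\gamma)v_0$ is continuous from $\Lg^k(G)$ to $\Hc$ by Remark~\ref{top}\eqref{top_item3} — combined with the elementary fact that a strongly continuous unitary action is uniformly equicontinuous on norm-compact subsets of $\Hc$. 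Once that lemma-level statement is in hand, assembling the three estimates above gives $p_{K_1,K_2}(A(\pi(f_W)v_0)-Av_0)\to 0$ for every seminorm, hence $\pi(f_W)v_0\to v_0$ in $\Hc_\infty$; since every smooth $v_0$ is a limit of elements of $\{\pi(f_W)v\mid W\in\Wc,\ v\in\Hc\}$ and all these elements lie in $\Hc_\infty$ by the first part, that set is dense in $\Hc_\infty$, completing the proof.
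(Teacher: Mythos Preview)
Your treatment of the first two assertions---that $\pi(f)v\in\Hc_\infty$ and formula~\eqref{appr_eq1}---is fine and matches the paper (which simply cites \cite[Lemma 3.1]{Bos76} for the smoothness and does the same one-line computation for~\eqref{appr_eq1}).

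The density argument, however, has a genuine gap: the intertwining identity
\[
\pi(f_W)\,\de\pi(\gamma)v_0 \;=\; \de\pi(\gamma)\,\pi(f_W)v_0
\]
that you rely on is \emph{false} for non-abelian $G$. Boundedness of $\pi(f_W)$ lets you pass the limit defining $\de\pi(\gamma)v_0$ through $\pi(f_W)$, but that yields
$\pi(f_W)\de\pi(\gamma)v_0=\lim_{t\to0}t^{-1}\bigl(\pi(f_W)\pi(\gamma(t))v_0-\pi(f_W)v_0\bigr)$,
which is not $\de\pi(\gamma)\pi(f_W)v_0=\lim_{t\to0}t^{-1}\bigl(\pi(\gamma(t))\pi(f_W)v_0-\pi(f_W)v_0\bigr)$, since $\pi(f_W)$ and $\pi(\gamma(t))$ do not commute. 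Concretely, from~\eqref{appr_eq1} one has $\de\pi(\gamma)\pi(f_W)v_0=\pi(D^R_\gamma f_W)v_0$, while (even in the unimodular case) your substitution gives $\pi(f_W)\de\pi(\gamma)v_0=-\pi(D_\gamma f_W)v_0$; the left- and right-invariant derivatives $D^R_\gamma$ and $D_\gamma$ differ exactly by an $\Ad_G$-twist (see~\eqref{aux4_eq1_R}).

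The fix is exactly what the paper does and what you almost wrote down: use $\de\pi(\gamma)\pi(x)v_0=\pi(x)\de\pi(\Ad_G(x^{-1})\gamma)v_0$ to obtain
\[
\de\pi(\gamma)(v_0-v_W)=\int_W f_W(x)\Bigl[(\1-\pi(x))\de\pi(\gamma)v_0+\pi(x)\bigl(\de\pi(\gamma)v_0-\de\pi(\Ad_G(x^{-1})\gamma)v_0\bigr)\Bigr]\de x,
\]
and then bound the two pieces separately. The first piece is handled by your compactness-uniformity argument on $\{\de\pi(\gamma)v_0:\gamma\in K_1\}$; the second piece is precisely what Lemma~\ref{unifcomp} is for. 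You invoked that lemma, but for the wrong term---it is not needed for $\Vert(\1-\pi(x))\de\pi(\gamma)v_0\Vert$, it is needed for the $\Ad$-twist correction that your false intertwining suppressed. Once this is corrected, your argument coincides with the paper's.
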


\begin{proof}
If $\gamma\in\Lg(G)$, $t\in\RR$, $v\in\Hc$, and $f\in\Cc^\infty_0(G)$, then 
$\pi(f)v\in\Hc_\infty$ by \cite[Lemma 3.1]{Bos76}, and moreover 
$$\pi(\gamma(t))(\pi(f)v)= \int_G f(x)\pi(\gamma(t)x)v\de x=\int_G f(\gamma(-t)x)\pi(x)v\de x$$
hence by differentiation at $t=0$ one obtains \eqref{appr_eq1}. 
For all $v\in\Hc$ and $W\in\Wc$ define $v_W:=\pi(f_W)v \in\Hc_\infty$. 
One has 
\begin{equation}\label{appr_proof_eq1}
v-v_W=\int_G f_W(x)v\de x-\int_G f_W(x)\pi(x)v\de x=\int_W f_W(x)(v-\pi(x)v)\de x 
\end{equation}
hence using $0\le f_W\in\Ci(G)$  
and $\int_G f_W(x)\de x=1$, one obtains 
$$\Vert v-v_W\Vert\le \int_W f_W(x)\Vert v-\pi(x)v\Vert\de x\le \sup_{x\in W}\Vert v-\pi(x)v\Vert $$
and therefore $\lim\limits_W\Vert v-v_W \Vert=0$. 
On the other hand, using \eqref{smvect_eq1}, 
$$(\forall y\in G)\quad (A(v-v_W))(y)=\pi(y)(v-v_W)
$$
and this implies 
$\lim\limits_W\sup\limits_{y\in G}\Vert(A(v-v_W))(y)\Vert
=\lim\limits_W\Vert v-v_W \Vert=0$. 

If $v\in\Hc_\infty$, $k\ge 1$, and $\gamma\in\Lg^k(G)$, 
one has $D_\gamma A=A\circ\de\pi(\gamma)\colon\Hc_\infty\to\Ci(G,\Hc)$ 
by Remark~\ref{top}\eqref{top_item5}, 
hence using \eqref{appr_proof_eq1}, one obtains 
$$\begin{aligned}
(D^k_\gamma A(v-v_W))(y)
&=(A(\de\pi(\gamma)(v-v_W)))(y) \\
&=\int_W f_W(x)\pi(y)(\de\pi(\gamma)v-\de\pi(\gamma)\pi(x)v)\de x \\
&=\int_W f_W(x)\pi(y)(\de\pi(\gamma)v-\pi(x)\de\pi(\Ad_G(x^{-1})\gamma)v)\de x.
\end{aligned}$$
Note that under the above integral one can write 
$$\begin{aligned}
\de\pi(\gamma)v-\pi(x)\de\pi(\Ad_G(x^{-1})\gamma)v
=
&(\1-\pi(x))\de\pi(\gamma)v \\
&+\pi(x)(\de\pi(\gamma)v-\de\pi(\Ad_G(x^{-1})\gamma)v),  
\end{aligned}$$ 
hence using again $0\le f_W\in\Ci(G)$, 
and $\int_G f_W(x)\de x=1$, one obtains for all $y\in G$, 
$$\Vert (D^k_\gamma A(v-v_W))(y)\Vert
\le  \sup_{x\in W}(\Vert (\1-\pi(x))\de\pi(\gamma)v\Vert +\Vert\de\pi(\gamma)v-\de\pi(\Ad_G(x^{-1})\gamma)v\Vert).$$
It then easily follows by Lemma~\ref{unifcomp} that $\lim\limits_W A(v-v_W)=0$ in $\Ci(G,\Hc)$ 
(see Definition~\ref{topology}), that is, 
$\lim\limits_W v_W=v$ in $\Hc_\infty$. 
\end{proof}

\begin{corollary}\label{pro}
Let $G$ be any connected locally compact group and 
$\Nc_0(G)$ be the family of compact normal subgroups $N\subseteq G$ for which $G/N$ is a Lie group. 
Fix any $N_0\in\Nc_0(G)$ with the corresponding projection $p\colon G\to G/N_0$, 
and let $\pi_0\colon G/N_0\to U(\Hc)$ be any representation 
with its space of smooth vectors~$\Hc_\infty(\pi_0)$. 

Then for every $N\in\Nc_0(G)$ with $N\subseteq N_0$ the space of smooth vectors of the representation 
$G/N\to U(\Hc)$, $gN\mapsto\pi_0(gN_0)$ is equal to $\Hc_\infty(\pi_0)$. 
Also, $\Hc_\infty(\pi_0)$ is a dense subspace of the space $\Hc_\infty(\pi)$ 
of smooth vectors of the representation 
$\pi:=\pi_0\circ p\colon G\to U(\Hc)$. 
\end{corollary}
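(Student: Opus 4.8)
The plan is to prove the two assertions by parallel arguments, each of which lifts one-parameter subgroups through a quotient morphism with compact kernel. In both cases one inclusion is pure functoriality of the smooth-vector construction: if $\varphi\colon H\to K$ is a continuous homomorphism of topological groups and $\rho\colon K\to U(\Hc)$ is a representation, then $\Hc_\infty(\rho)\subseteq\Hc_\infty(\rho\comp\varphi)$, because $(\rho\comp\varphi)(\cdot)v=(\rho(\cdot)v)\comp\varphi$ and precomposition with $\varphi$ carries $D_\gamma$ into $D_{\Lg(\varphi)\gamma}$, the functor $\Lg(\cdot)$ producing continuous maps. Taking $\varphi=q\colon G/N\to G/N_0$, $gN\mapsto gN_0$, gives $\Hc_\infty(\pi_0)\subseteq\Hc_\infty(\pi_0\comp q)$; taking $\varphi=p$ gives $\Hc_\infty(\pi_0)\subseteq\Hc_\infty(\pi)$. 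So for the first assertion only the reverse inclusion remains, and for the second only density of $\Hc_\infty(\pi_0)$ in $\Hc_\infty(\pi)$.

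\emph{First assertion.} Fix $v\in\Hc_\infty(\pi_0\comp q)$ and put $\psi:=\pi_0(q(\cdot))v\in\Ci(G/N,\Hc)$. Since $\pi_0\comp q$ annihilates the compact normal subgroup $N_0/N$ of $G/N$, the map $\psi$ is constant on the right cosets of $N_0/N$, hence descends to a map $\bar\psi$ on $G/N_0$, and $\bar\psi=\pi_0(\cdot)v$. To see $\bar\psi\in\Ci(G/N_0,\Hc)$, note that $\Lg(q)\colon\Lg(G/N)\to\Lg(G/N_0)$ is a surjective $\RR$-linear map of finite-dimensional spaces, hence has a continuous linear section $\sigma$. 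For $\gamma_0\in\Lg(G/N_0)$, $\bar x\in G/N_0$, and any lift $x$ of $\bar x$ one has $\bar\psi(\bar x\gamma_0(t))=\psi(x\,\sigma(\gamma_0)(t))$, so $(D_{\gamma_0}\bar\psi)(\bar x)=(D_{\sigma(\gamma_0)}\psi)(x)$; this limit exists because $\psi\in\Ci$, and since $D_{\sigma(\gamma_0)}\psi$ is again invariant under right translation by $N_0/N$ (which is normal in $G/N$) and $G/N\to G/N_0$ is a quotient map, the right-hand side descends to a continuous function of $(\bar x,\gamma_0)$. Iterating, $\bar\psi\in\Ci(G/N_0,\Hc)$, i.e.\ $v\in\Hc_\infty(\pi_0)$.

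\emph{Second assertion.} Since $G$ is connected locally compact, Proposition~\ref{appr} — applied to any smooth $\delta$-family on $G$, which exists (pull back bump functions through a small co-Lie quotient $G/N$, $N\in\Nc_0(G)$, using Lemma~\ref{lc-pro}) — shows that the G\aa rding space $\pi(\Cc^\infty_0(G))\Hc$ is dense in $\Hc_\infty(\pi)$. Hence it suffices to prove $\pi(\Cc^\infty_0(G))\Hc\subseteq\Hc_\infty(\pi_0)$. Fix $f\in\Cc^\infty_0(G)$ and $v\in\Hc$; normalise the Haar measure of the compact group $N_0$ to total mass $1$ and that of $G/N_0$ so that Weil's formula holds, and set $\bar f(\bar x):=\int_{N_0}f(xn)\,\de n$ for an arbitrary lift $x$ of $\bar x$ (well defined by invariance of $\de n$). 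Because $\pi(xn)=\pi_0(p(xn))=\pi_0(\bar x)$ for all $n\in N_0$, Weil's formula gives $\pi(f)v=\pi_0(\bar f)v$, where $\pi_0$ is extended to $L^1(G/N_0)$ as in Proposition~\ref{appr}. Thus, once we know $\bar f\in\Cc^\infty_0(G/N_0)$, Proposition~\ref{appr} applied to the Lie group $G/N_0$ yields $\pi(f)v=\pi_0(\bar f)v\in\Hc_\infty(\pi_0)$, completing the proof.

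The smoothness of the fibre integral $\bar f$ is the crux of the argument, and it is here that the pro-Lie hypothesis is used, through the surjectivity of $\Lg(p)\colon\Lg(G)\to\Lg(G/N_0)$, which holds because $p$ is a quotient morphism of pro-Lie groups (\cite{HM07}). Given $\gamma_0\in\Lg(G/N_0)$, pick $\gamma\in\Lg(G)$ with $p\comp\gamma=\gamma_0$; writing $x\gamma(t)n=(xn)\,(\Ad_G(n^{-1})\gamma)(t)$ and differentiating under the integral sign — legitimate because $N_0$ is compact, $\Ad_G$ is continuous, and $Df$ is continuous — one obtains
$$(D_{\gamma_0}\bar f)(\bar x)=\int_{N_0}(D_{\Ad_G(n^{-1})\gamma}f)(xn)\,\de n.$$
Choosing $\gamma=\sigma(\gamma_0)$ for a continuous linear section $\sigma$ of $\Lg(p)$, iterating, and treating the joint continuity in $(\bar x,\gamma_0)$ exactly as in the proof of Lemma~\ref{unifcomp} (via \cite{Fo45}), one gets $\bar f\in\Ci(G/N_0)$; and $\supp\bar f\subseteq p(\supp f)$ is compact, so $\bar f\in\Cc^\infty_0(G/N_0)$. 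The remaining verifications — existence and lift-independence of the limits, and continuity of the resulting maps — are routine within the differential calculus recalled above, so the only genuine difficulties are the lifting of one-parameter subgroups and the interchange of $D_\gamma$ with integration over $N_0$.
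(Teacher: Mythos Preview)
Your argument is correct. For the first assertion you essentially unfold the paper's one-line ``covering map'' remark, and in fact your version is more honest: the kernel $N_0/N$ need not be discrete, so $q$ is only a smooth submersion, and your section-of-$\Lg(q)$ argument is exactly what is needed to descend smoothness.

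For the second assertion your route differs from the paper's. The paper chooses the smooth $\delta$-family itself to consist of pullbacks $x\mapsto f(xN)$ with $f\in\Cc^\infty_0(G/N)$ and $N\subseteq N_0$; for such a function one has directly $\pi(f_W)v=(\pi_0\comp q)(f)v$, which lies in $\Hc_\infty(\pi_0\comp q)=\Hc_\infty(\pi_0)$ by Proposition~\ref{appr} applied to the Lie group $G/N$ together with the first assertion. Thus the paper never needs to integrate over $N_0$ or to prove smoothness of a fibre integral. You instead take an \emph{arbitrary} $f\in\Cc^\infty_0(G)$, push it down to $\bar f$ via Weil's formula, and prove $\bar f\in\Cc^\infty_0(G/N_0)$ by differentiation under the compact integral. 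This is more work but yields the stronger intermediate fact that the entire G\aa rding space of $\pi$ already sits inside $\Hc_\infty(\pi_0)$, not just the vectors coming from the special $\delta$-family. The paper's approach is more economical; yours is more informative and would survive in settings where one does not have the freedom to choose the $\delta$-family so conveniently.
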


\begin{proof}
The first assertion is clear since the homomorphism 
$G/N\to G/N_0$, $xN\mapsto xN_0$ is a covering map.  
For the second assertion, 
recalling the canonical isomorphism of $G$ onto the projective limit of Lie groups 
$\lim\limits_{N_0\subseteq N}G/N$ (see \cite{HM07}),  
one can use Proposition~\ref{appr} for a suitable $\delta$-family consisting of functions of the form 
$x\mapsto f(xN)$, where $N\in\Nc_0(G)$ with $N\subseteq N_0$, and $f\in\Cc_0^\infty(G/N)$. 
\end{proof}

\section{Moment sets of representations of topological groups}

\begin{definition}\label{momdef}
\normalfont
For any representation of a topological group $\pi\colon G\to U(\Hc)$, 
its {\it moment map} 
is 
$$\Psi_\pi\colon \Hc_\infty\setminus\{0\}\to\RR^{\Lg(G)},\quad 
\Psi_\pi(v)=\frac{1}{\ie}\frac{\langle\de\pi(\cdot)v, v\rangle}{\langle v, v\rangle}.$$
The set $\RR^{\Lg(G)}$ of all functions $\Lg(G)\to\RR$ 
is endowed with the topology of pointwise convergence, 
and one defines the \emph{closed moment set~$I_\pi$} of the representation~$\pi$ as the closure 
in $\RR^{\Lg(G)}$  of the image of the moment map $I^0_\pi:=\Psi_\pi(\Hc_\infty\setminus\{0\})$, 
that is, $I_\pi:=\overline{I^0_\pi}$. 
\end{definition}

\begin{lemma}\label{lincont}
Let $G$ be any pro-Lie group. 
Then for every representation $\pi\colon G\to U(\Hc)$ and every $v\in\Hc_\infty\setminus\{0\}$ the function 
$\Psi_\pi(v)\colon\Lg(G)\to\RR$ is linear and continuous. 
\end{lemma}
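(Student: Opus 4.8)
The goal is to show that for a pro-Lie group $G$, a representation $\pi$, and a fixed nonzero smooth vector $v$, the function $\gamma \mapsto \frac{1}{\ie}\frac{\langle \de\pi(\gamma)v, v\rangle}{\langle v,v\rangle}$ is both linear and continuous as a map $\Lg(G)\to\RR$. Since $\langle v,v\rangle$ is a positive constant, it suffices to treat $\gamma\mapsto \frac{1}{\ie}\langle\de\pi(\gamma)v,v\rangle$. The plan is to exploit the algebraic structure of $\Lg(G)$ as a locally convex Lie algebra provided by Lemma~\ref{pro-pre}, together with the defining limit formulas for scalar multiplication and vector addition on $\Lg(G)$, and to transfer these limits through $\de\pi$ using the basic continuity/intertwining properties recorded in Remark~\ref{top}.

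\textbf{Homogeneity.} First I would check that $\de\pi(t\cdot\gamma) = t\,\de\pi(\gamma)$ for $t\in\RR$. By Lemma~\ref{pro-pre}, $(t\cdot\gamma)(s) = \gamma(ts)$, so for the difference quotient defining $\de\pi$, a substitution $s\mapsto ts$ in $\frac{\pi((t\cdot\gamma)(s))v - v}{s} = t\cdot\frac{\pi(\gamma(ts))v-v}{ts}$ gives the claim by letting $s\to 0$ (the case $t=0$ is trivial since $0\cdot\gamma$ is the constant path at $\1$, so $\de\pi(0\cdot\gamma)=0$). This immediately yields $\RR$-homogeneity of $\Psi_\pi(v)$.

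\textbf{Additivity.} For additivity I would use $(\gamma_1+\gamma_2)(t) = \lim_{n\to\infty}(\gamma_1(t/n)\gamma_2(t/n))^n$, uniformly on compacts. The cleanest route is via the function $f := Av = \pi(\cdot)v \in \Ci(G,\Hc)$ and the identity $A(\de\pi(\gamma)v) = D_\gamma(Av)$ from Remark~\ref{top}\eqref{top_item5}: the value $\langle \de\pi(\gamma)v, v\rangle$ equals $\langle (D_\gamma f)(\1), v\rangle$, and $D_\gamma f(\1)$ depends on $\gamma$ through the one-parameter family $\{\gamma(t)\}$ near $\1$ only via its "tangent" behavior, which is additive. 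Concretely, I expect that the first-order Taylor-type expansion $\pi(\gamma(t))v = v + t\,\de\pi(\gamma)v + o(t)$ (valid because $v$ is smooth, so $t\mapsto \pi(\gamma(t))v$ is $C^1$ into $\Hc$) can be composed: $\pi((\gamma_1(t/n)\gamma_2(t/n))^n)v$ is, to first order in $t$, equal to $v + t(\de\pi(\gamma_1)v + \de\pi(\gamma_2)v) + o(t)$ uniformly in $n$, and passing to the limit $n\to\infty$ then differentiating at $t=0$ gives $\de\pi(\gamma_1+\gamma_2)v = \de\pi(\gamma_1)v + \de\pi(\gamma_2)v$. I anticipate that the main obstacle is making this uniform-in-$n$ estimate rigorous with only the hypotheses at hand; the honest alternative is to invoke the smoothness of $f=Av$ and the definition of $\Cc^1$ directly, writing $D_{\gamma_1+\gamma_2}f(\1)$ in terms of the derivative along the limiting path and using that differentiation commutes with the uniform-on-compacts limit for $C^1$ maps — this is where I would lean on \cite{BCR81} if a cited lemma is available.

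\textbf{Continuity.} Finally, continuity of $\gamma\mapsto \langle\de\pi(\gamma)v,v\rangle$ on $\Lg(G)$: by Remark~\ref{top}\eqref{top_item5} again this equals $\langle (D^1 f)(\1,\gamma), v\rangle$ where $f=Av\in\Ci(G,\Hc)$, and $D^1 f \colon G\times\Lg(G)\to\Hc$ is continuous by the very definition of $\Cc^1(G,\Hc)$ (Definition following \eqref{aux4_eq1}). Restricting to $\{\1\}\times\Lg(G)$ and pairing with the fixed vector $v$ yields a continuous real-valued function of $\gamma$. Dividing by $\langle v,v\rangle$ and the factor $\frac{1}{\ie}$ (noting $\langle\de\pi(\gamma)v,v\rangle$ is purely imaginary since $\de\pi(\gamma)$ is skew-adjoint on $\Hc_\infty$, as $\pi$ is unitary) completes the proof. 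The continuity part is routine; the additivity is the only place requiring genuine care.
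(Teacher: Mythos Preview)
Your proposal is essentially correct and amounts to unpacking what the paper leaves as a one-line citation: the paper's own proof simply invokes \cite[Anm.~2.1, p.~235]{Bos76} for the fact that $\de\pi(\cdot)v\colon\Lg(G)\to\Hc$ is continuous and linear, and then observes that the assertion follows. Your homogeneity and continuity arguments are clean and correct; in particular, the continuity argument via $D^1(Av)$ being continuous on $G\times\Lg(G)$ is exactly the right mechanism.

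The one place where your sketch is not yet a proof is additivity, and you correctly flag this. The Trotter-formula expansion you outline does work, but the ``uniformly in $n$'' remainder control is genuinely the content here and cannot be read off from the raw $C^1$ definition alone. The standard route (and what lies behind the citation to \cite{Bos76}) is the general fact from \cite{BCR81} that for a pre-Lie group and any $\varphi\in\Cc^1(G,\Yc)$ the map $\gamma\mapsto (D\varphi)(g,\gamma)$ is $\RR$-linear on $\Lg(G)$; applying this to $\varphi=Av$ at $g=\1$ gives $\de\pi(\gamma_1+\gamma_2)v=\de\pi(\gamma_1)v+\de\pi(\gamma_2)v$ without having to redo the uniform Trotter estimate by hand. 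So your instinct to ``lean on \cite{BCR81} if a cited lemma is available'' is precisely what the paper does, just one level of citation removed.
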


\begin{proof}
It was noted in \cite[Anm. 2.1, page 235]{Bos76} that $\de\pi(\cdot)v\colon\Lg(G)\to\Hc$ is a continuous linear map, 
hence our assertion follows at once. 
\end{proof}

For every real topological vector space $\Yc$ we denote by $\Yc^*$ the weak dual of $\Yc$, 
that is, the space of all continuous linear functionals on $\Yc$ endowed with the topology of pointwise convergence. 
Then Lemma~\ref{lincont} shows that for any representation $\pi\colon G\to U(\Hc)$ of a pro-Lie group 
one has $\Psi_\pi\colon\Hc_\infty\setminus\{0\}\to\Lg(G)^*$. 

\begin{lemma}\label{cont}
If $G$ is any 
topological group $G$, 
then for every representation $\pi\colon G\to U(\Hc)$ 
its moment map $\Psi_\pi\colon\Hc_\infty\setminus\{0\}\to\RR^{\Lg(G)}$ is continuous. 
\end{lemma}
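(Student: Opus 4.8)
The plan is to reduce the continuity of $\Psi_\pi$ to the continuity of the map $A\colon\Hc_\infty\to\Ci(G,\Hc)$ from Definition~\ref{smvect} together with the joint continuity of the differentiation operation on $\Ci(G,\Hc)$. Recall that $\Hc_\infty\setminus\{0\}$ carries the topology making $A$ a homeomorphism onto its image, and $\RR^{\Lg(G)}$ carries the topology of pointwise convergence. Hence it suffices to fix a single $\gamma\in\Lg(G)$ and show that the scalar function
$$v\mapsto \Psi_\pi(v)(\gamma)=\frac{1}{\ie}\,\frac{\langle\de\pi(\gamma)v,v\rangle}{\langle v,v\rangle}$$
is continuous on $\Hc_\infty\setminus\{0\}$ (continuity into a product follows from continuity of each coordinate).

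First I would observe that the two maps $v\mapsto\langle\de\pi(\gamma)v,v\rangle$ and $v\mapsto\langle v,v\rangle$ are each continuous on $\Hc_\infty$, after which the quotient is continuous on the open set $\{v\in\Hc_\infty\mid\langle v,v\rangle\ne0\}=\Hc_\infty\setminus\{0\}$ by elementary properties of locally convex spaces. For the denominator, $v\mapsto\langle v,v\rangle=\Vert v\Vert^2$ is continuous because the inclusion $\Hc_\infty\hookrightarrow\Hc$ is continuous by Remark~\ref{top}\eqref{top_item4}. For the numerator, write $\langle\de\pi(\gamma)v,v\rangle=\langle(D_\gamma(Av))(\1),(Av)(\1)\rangle$ using Remark~\ref{top}\eqref{top_item5} (namely $A(\de\pi(\gamma)v)=D_\gamma(Av)$) together with $(Av)(\1)=v$. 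Thus the numerator factors as
$$v\stackrel{A}{\longmapsto} Av\longmapsto\big((D_\gamma(Av))(\1),(Av)(\1)\big)\stackrel{\langle\cdot,\cdot\rangle}{\longmapsto}\langle\de\pi(\gamma)v,v\rangle .$$
The map $A$ is a homeomorphism onto its image by definition; the evaluation $\Ci(G,\Hc)\to\Hc$, $\psi\mapsto\psi(\1)$, and the differentiation $D_\gamma\colon\Ci(G,\Hc)\to\Ci(G,\Hc)$ are continuous by Proposition~\ref{ch1P2.4}\eqref{ch1P2.4_item2}; and the scalar product $\Hc\times\Hc\to\CC$ is jointly continuous. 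Composing these, the numerator is continuous on $\Hc_\infty$, and dividing by the continuous nowhere-vanishing denominator gives continuity of $\Psi_\pi(\cdot)(\gamma)$.

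Since $\gamma\in\Lg(G)$ was arbitrary, every coordinate of $\Psi_\pi$ is continuous, hence $\Psi_\pi\colon\Hc_\infty\setminus\{0\}\to\RR^{\Lg(G)}$ is continuous for the product topology, which is what we wanted. I do not expect any genuine obstacle here: the only points requiring care are making sure the target topology is the pointwise (product) one — so that coordinatewise continuity suffices — and correctly invoking the identity $A\circ\de\pi(\gamma)=D_\gamma\circ A$ from Remark~\ref{top}\eqref{top_item5} to express the numerator through operations already known to be continuous on $\Ci(G,\Hc)$. Note that this argument does not require $G$ to be a pro-Lie group; it works for an arbitrary topological group, consistently with the statement of the lemma.
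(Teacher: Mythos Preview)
Your proof is correct and follows essentially the same approach as the paper: reduce to coordinatewise continuity on $\RR^{\Lg(G)}$, then use continuity of the inclusion $\Hc_\infty\hookrightarrow\Hc$ and of $\de\pi(\gamma)$ on $\Hc_\infty$. The only difference is cosmetic: the paper invokes Remark~\ref{top}\eqref{top_item3} directly for the continuity of $\de\pi(\gamma)\colon\Hc_\infty\to\Hc_\infty$, while you unpack this via $A\circ\de\pi(\gamma)=D_\gamma\circ A$ and Proposition~\ref{ch1P2.4}\eqref{ch1P2.4_item2}, which is exactly how that remark is justified.
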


\begin{proof}
We must check that for every $\gamma\in\Lg(G)$ the function 
$$\Hc_\infty\setminus\{0\}\to\RR, \quad 
v\mapsto(\Psi_\pi(v))(\gamma)=\frac{1}{\ie}\frac{\langle\de\pi(\gamma)v, v\rangle}{\langle v, v\rangle}$$ 
is continuous. 
This follows since both the map $\de\pi(\gamma)\colon \Hc_\infty\to\Hc_\infty$ 
and the inclusion map $\Hc_\infty\hookrightarrow\Hc$ 
are continuous (Remark~\ref{top}).  
Compare also \cite[Lemmas 6.1--6.3]{Bos76}. 
\end{proof}

\begin{proposition}\label{directed}
Let $\pi\colon G\to U(\Hc)$ be any representation and $\{\Hc_j\}_{j\in J}$ be any 
family of closed linear subspaces of $\Hc$ satisfying the following conditions: 
\begin{enumerate}
\item  For every $j_1,j_2\in J$ there exists $j_3\in J$ with $\Hc_{j_1}+\Hc_{j_2}\subseteq\Hc_{j_3}$, 
and one defines $j_1\le j_2$ if and only if $\Hc_{j_1}\subseteq\Hc_{j_2}$. 
\item  If $p_j\colon \Hc\to\Hc_j$ is the orthogonal projection onto $\Hc_j$ for arbitrary $j\in J$, 
then 
\begin{equation}\label{directed_eq1}
(\forall v\in\Hc)\quad \lim\limits_{j\in J}p_j(v)=v\text{ in }\Hc.
\end{equation} 
\item For all $j\in J$ one has $\pi(G)\Hc_j\subseteq\Hc_j$, which defines  
$\pi_j\colon G\to U(\Hc_j)$, $x\mapsto \pi(x)\vert_{\Hc_j}$. 
\end{enumerate}
Then $I_\pi=\overline{\bigcup\limits_{j\in J}I^0_{\pi_j}}=\overline{\bigcup\limits_{j\in J}I_{\pi_j}}$. 
If moreover $I_{\pi_j}$ is convex for all $j\in J$, then also $I_\pi$ is convex. 
\end{proposition}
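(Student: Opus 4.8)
The plan is to prove the three claimed equalities and then the convexity transfer. For the inclusions, I would first note that each $\Hc_j$ is $\pi$-invariant and closed, so by Remark~\ref{intert} we have $\Hc_{j,\infty}\subseteq\Hc_\infty$, and for $v\in\Hc_{j,\infty}\setminus\{0\}$ one checks directly from the definitions that $\de\pi_j(\gamma)v=\de\pi(\gamma)v$ for all $\gamma\in\Lg(G)$, hence $\Psi_{\pi_j}(v)=\Psi_\pi(v)$. This gives $I^0_{\pi_j}\subseteq I^0_\pi$ for every $j$, so $\bigcup_j I^0_{\pi_j}\subseteq I^0_\pi$ and therefore $\overline{\bigcup_j I^0_{\pi_j}}\subseteq I_\pi$; a fortiori $\overline{\bigcup_j I_{\pi_j}}\subseteq I_\pi$ as well, since each $I_{\pi_j}=\overline{I^0_{\pi_j}}\subseteq I_\pi$.

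The reverse inclusion $I_\pi\subseteq\overline{\bigcup_j I^0_{\pi_j}}$ is the substantive point. Since $I_\pi$ is the closure of $I^0_\pi$, it suffices to show $I^0_\pi\subseteq\overline{\bigcup_j I^0_{\pi_j}}$, i.e. that for any $v\in\Hc_\infty\setminus\{0\}$ the functional $\Psi_\pi(v)$ lies in the closure (in $\RR^{\Lg(G)}$, the topology of pointwise convergence) of $\bigcup_j I^0_{\pi_j}$. The natural candidate approximants are $\Psi_{\pi_j}(p_j v)=\Psi_\pi(p_j v)$, which are defined once $p_j v\in\Hc_{j,\infty}\setminus\{0\}$. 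The first thing to settle is that $p_j v\in\Hc_{j,\infty}$: the orthogonal projection $p_j\colon\Hc\to\Hc_j$ is a bounded operator intertwining $\pi$ and $\pi_j$ (as $\Hc_j$ is $\pi$-invariant, hence so is $\Hc_j^\perp$), so by Remark~\ref{intert} again, $p_j(\Hc_\infty)\subseteq\Hc_{j,\infty}$; moreover one has $\de\pi(\gamma)p_j v=p_j\de\pi(\gamma)v$ because $p_j$ intertwines the representations and restricts to smooth vectors. By \eqref{directed_eq1}, $p_j v\to v$ in $\Hc$, so $p_j v\neq 0$ for $j$ large; restricting the net to such $j$, we may assume $p_j v\in\Hc_{j,\infty}\setminus\{0\}$ throughout. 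Then for each fixed $\gamma\in\Lg(G)$,
\begin{equation*}
(\Psi_\pi(p_j v))(\gamma)=\frac{1}{\ie}\frac{\langle\de\pi(\gamma)p_j v,p_j v\rangle}{\langle p_j v,p_j v\rangle}
=\frac{1}{\ie}\frac{\langle p_j\de\pi(\gamma)v,p_j v\rangle}{\langle p_j v,p_j v\rangle}
=\frac{1}{\ie}\frac{\langle \de\pi(\gamma)v,p_j v\rangle}{\langle p_j v,p_j v\rangle},
\end{equation*}
and since $p_j v\to v$ in $\Hc$ with $v\neq 0$, both inner products converge ($\langle\de\pi(\gamma)v,p_j v\rangle\to\langle\de\pi(\gamma)v,v\rangle$ and $\langle p_j v,p_j v\rangle\to\langle v,v\rangle\neq0$), so $(\Psi_\pi(p_j v))(\gamma)\to(\Psi_\pi(v))(\gamma)$. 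As $\gamma$ was arbitrary, $\Psi_\pi(p_j v)\to\Psi_\pi(v)$ pointwise, proving $\Psi_\pi(v)\in\overline{\bigcup_j I^0_{\pi_j}}$. This establishes $I_\pi=\overline{\bigcup_j I^0_{\pi_j}}$, and then $\overline{\bigcup_j I^0_{\pi_j}}\subseteq\overline{\bigcup_j I_{\pi_j}}\subseteq I_\pi$ forces all three to coincide.

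For the convexity statement: assume each $I_{\pi_j}$ is convex. Given $a,b\in I_\pi$ and $t\in[0,1]$, I want $ta+(1-t)b\in I_\pi$. Using the equality $I_\pi=\overline{\bigcup_j I_{\pi_j}}$, pick nets $a_\alpha,b_\alpha\in\bigcup_j I_{\pi_j}$ with $a_\alpha\to a$, $b_\alpha\to b$. The obstacle is that $a_\alpha$ and $b_\alpha$ may come from different indices; but by the directedness condition (1), given that $a_\alpha\in I_{\pi_{j_1}}$ and $b_\alpha\in I_{\pi_{j_2}}$, there is $j_3$ with $\Hc_{j_1}+\Hc_{j_2}\subseteq\Hc_{j_3}$, and since $I_{\pi_{j_1}},I_{\pi_{j_2}}\subseteq I_{\pi_{j_3}}$ (by the monotonicity established in the first paragraph, applied to the inclusions $\Hc_{j_1},\Hc_{j_2}\subseteq\Hc_{j_3}$), both $a_\alpha,b_\alpha\in I_{\pi_{j_3}}$. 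Convexity of $I_{\pi_{j_3}}$ then gives $ta_\alpha+(1-t)b_\alpha\in I_{\pi_{j_3}}\subseteq I_\pi$. Passing to the limit and using that $I_\pi$ is closed, $ta+(1-t)b\in I_\pi$, so $I_\pi$ is convex. The main care needed throughout is the smooth-vector bookkeeping in paragraph two — verifying $p_j v\in\Hc_{j,\infty}$ and the commutation $\de\pi(\gamma)p_j=p_j\de\pi(\gamma)$ — together with the harmless restriction of the net to indices where $p_j v\neq0$.
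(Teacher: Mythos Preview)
Your proof is correct and follows the same overall architecture as the paper's: first the easy inclusion $I^0_{\pi_j}\subseteq I^0_\pi$, then approximate an arbitrary $v\in\Hc_\infty\setminus\{0\}$ by $p_j v$ to get the reverse inclusion, then the directedness argument for convexity. The one place where you diverge from the paper is in how you pass from $p_j v\to v$ to $\Psi_\pi(p_j v)\to\Psi_\pi(v)$. The paper upgrades the $\Hc$-norm convergence $p_j v\to v$ to convergence in the locally convex space $\Hc_\infty$ (via $A(p_j v)=p_j\circ A(v)$ and uniform-on-compacts convergence of the strongly convergent projections), and then invokes Lemma~\ref{cont} on the continuity of $\Psi_\pi\colon\Hc_\infty\setminus\{0\}\to\RR^{\Lg(G)}$. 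You instead bypass the $\Hc_\infty$-topology entirely: using $\de\pi(\gamma)p_j=p_j\de\pi(\gamma)$ and self-adjointness of $p_j$, you reduce to the convergence of $\langle\de\pi(\gamma)v,p_j v\rangle$ and $\langle p_j v,p_j v\rangle$ in $\CC$, which needs only the hypothesis \eqref{directed_eq1} at face value. Your route is more elementary for this particular proposition; the paper's route has the advantage of establishing the stronger intermediate fact $p_j v\to v$ in $\Hc_\infty$, which is of independent interest and reuses the machinery already set up.
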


\begin{proof}
For every $j\in J$, denoting by $\Hc_{j,\infty}$ the space of smooth vectors for the representation $\pi_j$, 
one has $\Hc_{j,\infty}=\Hc_j\cap\Hc_\infty$, and this implies $I^0_{\pi_j}\subseteq I^0_\pi$, hence 
$\overline{\bigcup\limits_{j\in J}I_{\pi_j}}\subseteq I_\pi$. 

To prove the converse inclusion, let $v\in\Hc_\infty$ arbitrary. 
For all $j\in J$ and $x\in G$ one has $p_j\pi(x)=\pi_j(x)p_j$, 
hence Remark~\ref{intert} implies $p_j(\Hc_\infty)\subseteq \Hc_{j,\infty}$. 
In particular $p_j(v)\in \Hc_{j,\infty}=\Hc_j\cap\Hc_\infty\subseteq\Hc_\infty$. 
We now prove that 
\begin{equation}\label{directed_proof_eq1}
\lim_{j\in J}p_j(v)=v\text{ in }\Hc_\infty.
\end{equation}
In fact, $(A(p_j(v)))(y)=\pi(y)p_j(v)=p_j\pi(y)v$ for all $y\in G$, 
and then \eqref{directed_eq1} implies \eqref{directed_proof_eq1}. 
Now, using Lemma~\ref{cont}, 
$$\Psi_\pi(v)=\lim_{j\in J}\Psi_\pi(p_j(v))=\lim_{j\in J}\Psi_{\pi_j}(p_j(v))\in 
\overline{\bigcup\limits_{j\in J}I^0_{\pi_j}}\subseteq\overline{\bigcup\limits_{j\in J}I_{\pi_j}}$$
and this completes the proof of the equalities from the statement. 

The convexity assertion follows from these equalities, 
since the closure of any upwards directed family of closed convex sets is in turn convex. 
\end{proof}

The following result was suggested by \cite[Lemma 2.1]{Wi92}.

\begin{proposition}\label{ortsum}
If $\pi_j\colon G\to U(\Hc_j)$ is any representation for $j=1,\dots,m$, 
and one defines $\pi:=\pi_1\oplus\cdots\oplus\pi_m$, 
then one has 
$$I^0_\pi=\{t_1f_1+\cdots+t_mf_m\mid  f_j\in I^0_{\pi_j},\ 0\le  t_j\le 1\text{ for }j=1,\dots,m, 
 \text{ and } t_1+\cdots+t_m=1\}.$$ 
If moreover $I_{\pi_j}$ is convex for $j=1,\dots,m$, then also $I_\pi$ is, and 
$I_\pi=\conv(I_{\pi_1}\cup \cdots\cup I_{\pi_m})$. 
\end{proposition}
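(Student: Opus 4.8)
The plan is to work directly from the definitions of the smooth vectors and the moment map for the direct sum representation. First I would identify the space of smooth vectors $\Hc_\infty$ of $\pi=\pi_1\oplus\cdots\oplus\pi_m$ on $\Hc=\Hc_1\oplus\cdots\oplus\Hc_m$. Since each $\Hc_j$ is $\pi$-invariant and closed, Remark~\ref{intert} and the argument in the proof of Proposition~\ref{directed} give $p_j(\Hc_\infty)\subseteq\Hc_{j,\infty}$ where $p_j$ is the orthogonal projection onto $\Hc_j$; conversely $\Hc_{j,\infty}\subseteq\Hc_\infty$ because $A\colon\Hc_{j,\infty}\to\Ci(G,\Hc_j)\subseteq\Ci(G,\Hc)$ is continuous. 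Hence $\Hc_\infty=\Hc_{1,\infty}\oplus\cdots\oplus\Hc_{m,\infty}$, and for $v=(v_1,\dots,v_m)$ with each $v_j\in\Hc_{j,\infty}$ one has $\de\pi(\gamma)v=(\de\pi_1(\gamma)v_1,\dots,\de\pi_m(\gamma)v_m)$ for every $\gamma\in\Lg(G)$.

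Next I would compute $\Psi_\pi(v)$ for such a $v\ne 0$. Writing $t_j:=\langle v_j,v_j\rangle/\langle v,v\rangle$, one has $0\le t_j\le 1$ and $t_1+\cdots+t_m=1$, and
$$
\Psi_\pi(v)(\gamma)
=\frac{1}{\ie}\frac{\langle\de\pi(\gamma)v,v\rangle}{\langle v,v\rangle}
=\sum_{j\colon v_j\ne 0} t_j\cdot\frac{1}{\ie}\frac{\langle\de\pi_j(\gamma)v_j,v_j\rangle}{\langle v_j,v_j\rangle}
=\sum_{j\colon v_j\ne 0} t_j\,\Psi_{\pi_j}(v_j)(\gamma).
$$
For the indices $j$ with $v_j=0$ the corresponding $t_j=0$, so after discarding those terms and rescaling the remaining $t_j$'s (which still sum to $1$), $\Psi_\pi(v)$ lies in the right-hand side set with the $f_j$ chosen in $I^0_{\pi_j}$ for the surviving indices and chosen arbitrarily in $I^0_{\pi_j}$ (with weight $0$) otherwise; this requires $I^0_{\pi_j}\ne\emptyset$, which we may assume — if some $\Hc_{j,\infty}=\{0\}$ then $\pi_j$ contributes nothing and can be dropped from the sum, and one checks the stated formula degenerates correctly. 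This proves one inclusion. For the reverse inclusion, given $f_j\in I^0_{\pi_j}$, say $f_j=\Psi_{\pi_j}(w_j)$ with $w_j\in\Hc_{j,\infty}\setminus\{0\}$, and weights $t_j\ge 0$ summing to $1$, I would set $v_j:=\sqrt{t_j}\,w_j/\Vert w_j\Vert$ (so $\langle v_j,v_j\rangle=t_j$) and $v:=(v_1,\dots,v_m)\ne 0$; then the computation above runs in reverse and yields $\Psi_\pi(v)=\sum_j t_j f_j$. This establishes the claimed description of $I^0_\pi$.

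Finally, assume each $I_{\pi_j}$ is convex. Taking closures in $\RR^{\Lg(G)}$ of the formula for $I^0_\pi$ and using that $I^0_{\pi_j}$ is dense in $I_{\pi_j}$, together with continuity of the map $(t_1,\dots,t_m,f_1,\dots,f_m)\mapsto\sum_j t_j f_j$ from the compact simplex times $\prod_j I_{\pi_j}$ into $\RR^{\Lg(G)}$ (pointwise in $\gamma$ this is just a jointly continuous bilinear-type expression), one gets $I_\pi\subseteq\{\sum_j t_j f_j\mid f_j\in I_{\pi_j},\ t_j\ge 0,\ \sum_j t_j=1\}$, and the latter set is contained in $\conv(I_{\pi_1}\cup\cdots\cup I_{\pi_m})$. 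The opposite inclusion $\conv(I_{\pi_1}\cup\cdots\cup I_{\pi_m})\subseteq I_\pi$ follows because each $I_{\pi_j}\subseteq I_\pi$ (take $t_j=1$) and $I_\pi$ is closed; convexity of $I_\pi$ is then immediate, and so is the identity $I_\pi=\conv(I_{\pi_1}\cup\cdots\cup I_{\pi_m})$. The main point to be careful about is the bookkeeping with vanishing components $v_j=0$ and the possibility $\Hc_{j,\infty}=\{0\}$ in the first inclusion, and the use of density of $I^0_{\pi_j}$ in $I_{\pi_j}$ together with the pointwise-convergence topology when passing to closures; neither is a deep obstacle, but both need to be handled cleanly so that the stated equalities hold without extra hypotheses.
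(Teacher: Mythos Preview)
Your approach is the same as the paper's: identify $\Hc_\infty=\Hc_{1,\infty}\oplus\cdots\oplus\Hc_{m,\infty}$ via Remark~\ref{intert} and read off the identity
\[
\Psi_\pi(v_1\oplus\cdots\oplus v_m)=\sum_{j=1}^m\frac{\Vert v_j\Vert^2}{\Vert v\Vert^2}\,\Psi_{\pi_j}(v_j),
\]
which immediately yields the description of $I^0_\pi$. The paper's proof stops here with ``the conclusion follows at once'', so for the first assertion your argument is correct and simply more explicit than the paper's.

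For the second assertion there is a genuine gap. The inference ``$\conv(\bigcup_j I_{\pi_j})\subseteq I_\pi$ because each $I_{\pi_j}\subseteq I_\pi$ and $I_\pi$ is closed'' is invalid: closedness alone does not let you pass to convex hulls, and invoking convexity of $I_\pi$ here would be circular. (The inclusion is nevertheless true; the correct reason is that any $\sum_j t_j g_j$ with $g_j\in I_{\pi_j}$ is a pointwise limit of elements $\sum_j t_j f_j^{(n)}\in I^0_\pi$ with $f_j^{(n)}\in I^0_{\pi_j}$, using the first part.) More seriously, your opposite inclusion $I_\pi\subseteq\{\sum_j t_j f_j:f_j\in I_{\pi_j}\}$ fails: the map $(t,f)\mapsto\sum_j t_j f_j$ is continuous, but its domain is not compact, so its image need not be closed. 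Concretely, for $G=\RR^2$, $\pi_1$ the trivial representation and $\pi_2$ acting on $L^2(\RR)$ by $(a,b)\mapsto e^{\ie(ax+b)}$, one gets $I_{\pi_1}=\{0\}$, $I_{\pi_2}=\RR\times\{1\}$, and $I_\pi=\RR\times[0,1]$, whereas $\conv(I_{\pi_1}\cup I_{\pi_2})=\{0\}\cup(\RR\times(0,1])$ is strictly smaller and not closed. What your argument does establish cleanly is $\conv(\bigcup_j I_{\pi_j})\subseteq I_\pi\subseteq\overline{\conv(\bigcup_j I_{\pi_j})}$, hence $I_\pi=\overline{\conv(\bigcup_j I_{\pi_j})}$ and $I_\pi$ is convex; this closed-hull version is exactly what is used downstream in Proposition~\ref{ortinfty}, so the discrepancy concerns only the unclosed equality as stated.
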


\begin{proof}
Let $\Hc:=\Hc_1\oplus\cdots\oplus\Hc_m$, and $\Hc_{j,\infty}$ be the space of smooth vectors for the representation $\pi_j$ for $j=1,\dots,m$. 
Using Remark~\ref{intert}, one can see that 
$$\Hc_\infty=\{v_1\oplus \cdots\oplus v_m\mid v_j\in\Hc_{j,\infty}\text{ for }j=1,\dots,m\}.$$ 
Moreover, if $v_j\in\Hc_{j,\infty}\setminus\{0\}$ for $j=1,\dots,m$, 
and $v:=v_1\oplus \cdots\oplus v_m$, then it is easily checked that
\begin{equation}\label{ortsum_proof_eq1}
\Psi_\pi(v)
=\frac{\Vert v_1\Vert^2}{\Vert v\Vert^2}\Psi_{\pi_1}(v_1)
+\cdots+\frac{\Vert v_m\Vert^2}{\Vert v\Vert^2}\Psi_{\pi_m}(v_m)
\end{equation}
with $\Vert v\Vert^2=\Vert v_1\Vert^2+\cdots+\Vert v_m\Vert^2$, 
and the conclusion follows at once. 
\end{proof}

The following proposition is a generalization of \cite[Prop. 4.2]{AL92} 
from finite-dim\-ensio\-nal Lie groups to arbitrary topological groups, 
with a slightly different proof that relies on the above Proposition~\ref{directed}. 

\begin{proposition}\label{ortinfty}
If $\pi_j\colon G\to U(\Hc_j)$ is any representation for which  
$I_{\pi_j}$
is convex for all $j\in J$,  
then for the representation $\pi:=\bigoplus\limits_{j\in J}\pi_j$ 
one has 
$I_\pi=\overline{\conv(\bigcup\limits_{j\in J}I_{\pi_j})}$ and this is a convex set. 
\end{proposition}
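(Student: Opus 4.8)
The plan is to deduce this from the finite-direct-sum case (Proposition~\ref{ortsum}) together with the directed-limit machinery of Proposition~\ref{directed}. First I would set $\Hc:=\bigoplus_{j\in J}\Hc_j$ and, for each nonempty finite subset $F\subseteq J$, let $\Hc_F:=\bigoplus_{j\in F}\Hc_j$, viewed as a closed $\pi$-invariant subspace of $\Hc$, with $\pi_F:=\pi\vert_{\Hc_F}=\bigoplus_{j\in F}\pi_j$. The family $\{\Hc_F\}$ indexed by the finite subsets of $J$, ordered by inclusion, is upward directed (given $F_1,F_2$ take $F_3:=F_1\cup F_2$), the orthogonal projections $p_F\colon\Hc\to\Hc_F$ satisfy $\lim_F p_F(v)=v$ for every $v\in\Hc$ (this is exactly the definition of the Hilbert-space direct sum), and each $\Hc_F$ is $\pi$-invariant by construction. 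Thus the hypotheses (1)--(3) of Proposition~\ref{directed} are met, and it gives
\[
I_\pi=\overline{\bigcup_F I_{\pi_F}}.
\]

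Next I would invoke Proposition~\ref{ortsum}: since each $I_{\pi_j}$ is convex for $j\in F$, that proposition yields that $I_{\pi_F}$ is convex and $I_{\pi_F}=\conv\bigl(\bigcup_{j\in F}I_{\pi_j}\bigr)$. Feeding this into the previous display gives
\[
I_\pi=\overline{\bigcup_F \conv\Bigl(\bigcup_{j\in F}I_{\pi_j}\Bigr)}.
\]
Now $\bigcup_F \conv\bigl(\bigcup_{j\in F}I_{\pi_j}\bigr)=\conv\bigl(\bigcup_{j\in J}I_{\pi_j}\bigr)$, because every element of a convex hull of a union is a finite convex combination of points, hence already lies in the convex hull indexed by some finite $F$; and conversely each $\conv(\bigcup_{j\in F}I_{\pi_j})\subseteq\conv(\bigcup_{j\in J}I_{\pi_j})$. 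Therefore
\[
I_\pi=\overline{\conv\Bigl(\bigcup_{j\in J}I_{\pi_j}\Bigr)}.
\]

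Finally, convexity of $I_\pi$ follows from the last sentence of Proposition~\ref{directed}: $I_\pi$ is the closure of the upward directed family of closed convex sets $\{I_{\pi_F}\}$, hence convex. (Alternatively, the closure of a convex set in a topological vector space is convex, applied to $\conv(\bigcup_j I_{\pi_j})$.) I do not anticipate a genuine obstacle here; the only point requiring a little care is the bookkeeping identity $\bigcup_F\conv(\bigcup_{j\in F}I_{\pi_j})=\conv(\bigcup_{j\in J}I_{\pi_j})$, which rests on the elementary fact that convex combinations involve only finitely many terms, and checking that $\{\Hc_F\}$ really satisfies condition~(2) of Proposition~\ref{directed}, i.e.\ that partial sums converge in the Hilbert direct sum — both routine.
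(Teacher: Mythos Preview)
Your proposal is correct and follows precisely the route the paper takes: apply Proposition~\ref{directed} to the upward directed family $\{\Hc_F\}_{F\subseteq J\text{ finite}}$ of finite partial sums, then compute each $I_{\pi_F}$ via Proposition~\ref{ortsum}. Your write-up simply spells out the verifications (directedness, $\lim_F p_F(v)=v$, the bookkeeping identity $\bigcup_F\conv(\bigcup_{j\in F}I_{\pi_j})=\conv(\bigcup_{j\in J}I_{\pi_j})$) that the paper leaves implicit.
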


\begin{proof}
Use Proposition~\ref{directed} for the family of finite orthogonal direct sums of representations $\pi_j$, 
and then compute the closed momentum set of each of these finite sums by means of Proposition~\ref{ortsum}. 
\end{proof}

The last results of the present section were suggested by \cite[Lemma 2.2]{Wi92} 
on finite-dim\-ension\-al representations. 

\begin{proposition}\label{pullback}
Let $\phi\colon G_1\to G_2$ be any morphism of topological groups 
and define $P\colon \RR^{\Lg(G_2)}\to\RR^{\Lg(G_1)}$, $P(f):=f\circ\Lg(\phi)$.  
For any representation $\pi\colon G_2\to U(\Hc)$ denote by $\Hc_\infty(\pi)$ and $\Hc_\infty(\pi\circ\phi)$ 
the spaces of smooth vectors of the representations $\pi$ and $\pi\circ\phi$, respectively. 
Then $\Hc_\infty(\pi)\subseteq \Hc_\infty(\pi\circ\phi)$ 
and $P(I^0_\pi)\subseteq I^0_{\pi\circ\phi}$. 
If moreover  $\Hc_\infty(\pi)$ is dense in $\Hc_\infty(\pi\circ\phi)$,  
then $\overline{P(I_\pi)}=I_{\pi\circ\phi}$. 
If in addition $I_\pi$ is convex , then also $I_{\pi\circ\phi}$ is. 
\end{proposition}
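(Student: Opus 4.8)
The plan is to establish the four assertions in turn, since each feeds into the next.

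First, for the inclusion $\Hc_\infty(\pi)\subseteq\Hc_\infty(\pi\circ\phi)$: a vector $v$ is smooth for $\pi$ iff $\pi(\cdot)v\in\Ci(G_2,\Hc)$. The map $\pi\circ\phi(\cdot)v$ is the composition $(\pi(\cdot)v)\circ\phi$, so I would argue that pulling back along the continuous homomorphism $\phi$ preserves smoothness: the point is that $\phi$ maps $\Lg(G_1)$ into $\Lg(G_2)$ via $\Lg(\phi)$, and the directional derivative $D_{\gamma_1}(f\circ\phi)$ at a point equals $(D_{\Lg(\phi)\gamma_1}f)\circ\phi$ by the chain rule built into the definition \eqref{aux4_eq1} (since $\phi(g\gamma_1(t))=\phi(g)(\phi\circ\gamma_1)(t)$ and $\phi\circ\gamma_1=\Lg(\phi)\gamma_1$). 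Iterating this gives that all higher derivatives of $(\pi(\cdot)v)\circ\phi$ exist and are continuous, so $v\in\Hc_\infty(\pi\circ\phi)$. The same chain-rule identity also yields, for $v\in\Hc_\infty(\pi)$ and $\gamma_1\in\Lg(G_1)$, that $\de(\pi\circ\phi)(\gamma_1)v=\de\pi(\Lg(\phi)\gamma_1)v$. This immediately gives $\Psi_{\pi\circ\phi}(v)=\Psi_\pi(v)\circ\Lg(\phi)=P(\Psi_\pi(v))$ for every nonzero $v\in\Hc_\infty(\pi)$, hence $P(I^0_\pi)\subseteq I^0_{\pi\circ\phi}$.

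Next, assume $\Hc_\infty(\pi)$ is dense in $\Hc_\infty(\pi\circ\phi)$. I would show $\overline{P(I_\pi)}=I_{\pi\circ\phi}$ by proving both inclusions. For ``$\subseteq$'': $P$ is continuous (it is given by pullback along a fixed map, hence coordinatewise it just relabels evaluation points, so it is continuous for the topologies of pointwise convergence), so $\overline{P(I_\pi)}=\overline{P(\overline{I^0_\pi})}\subseteq\overline{P(I^0_\pi)}\subseteq\overline{I^0_{\pi\circ\phi}}=I_{\pi\circ\phi}$. For ``$\supseteq$'': take any $f\in I^0_{\pi\circ\phi}$, so $f=\Psi_{\pi\circ\phi}(w)$ for some $w\in\Hc_\infty(\pi\circ\phi)\setminus\{0\}$. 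By density choose $v_n\to w$ in $\Hc_\infty(\pi\circ\phi)$ with $v_n\in\Hc_\infty(\pi)$; for large $n$, $v_n\neq 0$, and by continuity of $\Psi_{\pi\circ\phi}$ (Lemma~\ref{cont}) we get $\Psi_{\pi\circ\phi}(v_n)\to f$. But $\Psi_{\pi\circ\phi}(v_n)=P(\Psi_\pi(v_n))\in P(I^0_\pi)\subseteq P(I_\pi)$, so $f\in\overline{P(I_\pi)}$; hence $I^0_{\pi\circ\phi}\subseteq\overline{P(I_\pi)}$ and taking closures gives $I_{\pi\circ\phi}\subseteq\overline{P(I_\pi)}$.

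Finally, if $I_\pi$ is convex, then $P(I_\pi)$ is convex because $P$ is linear (it is precomposition by a linear map between the spaces $\Lg(G_1),\Lg(G_2)$, hence linear on the function spaces $\RR^{\Lg(G_2)}\to\RR^{\Lg(G_1)}$), and the closure of a convex set is convex, so $I_{\pi\circ\phi}=\overline{P(I_\pi)}$ is convex. The main obstacle I anticipate is the very first step: carefully verifying the chain rule $D_{\gamma_1}(f\circ\phi)=(D_{\Lg(\phi)\gamma_1}f)\circ\phi$ and its iterated version within the topological-group differential calculus of Section~2, together with the continuity of the resulting higher derivatives — this is where one must be attentive to the definitions in \eqref{aux4_eq1} and Definition~\ref{topology}, whereas the density argument and the convexity bookkeeping are then routine.
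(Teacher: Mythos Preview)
Your proposal is correct and follows essentially the same approach as the paper: the paper also derives the chain rule $D^k(\psi\circ\phi)(x,\gamma_1,\dots,\gamma_k)=(D^k\psi)(\phi(x),\phi\circ\gamma_1,\dots,\phi\circ\gamma_k)$ (citing \cite[Th.~1.3.2.2(i')]{BCR81} rather than verifying it from scratch), obtains $\Psi_{\pi\circ\phi}(v)=P(\Psi_\pi(v))$ for $v\in\Hc_\infty(\pi)$, and then uses density together with the continuity of $\Psi_{\pi\circ\phi}$ (Lemma~\ref{cont}) and linearity/continuity of $P$ exactly as you do. The only cosmetic difference is that the paper condenses your two-inclusion argument for $\overline{P(I_\pi)}=I_{\pi\circ\phi}$ into the single line $I_{\pi\circ\phi}=\overline{\Psi_{\pi\circ\phi}(\Hc_\infty(\pi)\setminus\{0\})}=\overline{P(I^0_\pi)}$.
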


\begin{proof}
First note that 
for every $\psi\in\Ci(G_2,\Hc)$ one has $\psi\circ\phi\in\Ci(G_1,\Hc)$, 
and 
$$D^k(\psi\circ\phi)(x,\gamma_1,\dots,\gamma_k)=(D^k\psi)(\phi(x),\phi\circ\gamma_1,\dots,\phi\circ\gamma_k)$$
for all $x\in G_1$, $k\ge 1$ and $\gamma_1,\dots,\gamma_k\in\Lg(G_1)$ 
(see e.g., \cite[Th. 1.3.2.2(i')]{BCR81}). 

For every $v\in \Hc_\infty(\pi)$ one has $\pi(\cdot)v\in\Ci(G_2,\Hc)$, 
hence the above remark implies $(\pi\circ\phi)(\cdot)v\in\Ci(G_1,\Hc)$, 
that is, $v\in \Hc_\infty(\pi\circ\phi)$. 
Moreover, it is easily checked that 
\begin{equation}\label{pullback_proof_eq1}
\Psi_{\pi\circ\phi}(v)=\Psi_\pi(v)\circ\Lg(\phi)=P(\Psi_\pi(v)). 
\end{equation}
Now, if  $\Hc_\infty(\pi)$ is dense in $\Hc_\infty(\pi\circ\phi)$, 
then $I_{\pi\circ\phi}=\overline{\Psi_{\pi\circ\phi}(\Hc_\infty(\pi)\setminus\{0\})}$ by Lemma~\ref{cont}.  
Therefore, using \eqref{pullback_proof_eq1}, one obtains $I_{\pi\circ\phi}=\overline{P(I^0_\pi)}$, 
and the equality $\overline{P(I_\pi)}=I_{\pi\circ\phi}$ follows at once. 
Finally, if $I_\pi$ is convex, then its image through the linear map $P$ is convex, 
and so is the closure that image, that is, $I_{\pi\circ\phi}$. 
This completes the proof. 
\end{proof}

\begin{corollary}\label{finitedim}
Let $G$ be any topological group. 
If $\pi\colon G\to U(\Hc)$ is any representation with $\dim\Hc<\infty$, then $\Hc_\infty=\Hc$. 
\end{corollary}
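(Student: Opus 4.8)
The statement $\Hc_\infty=\Hc$ means precisely that the orbit map $\pi(\cdot)v$ belongs to $\Ci(G,\Hc)$ for every $v\in\Hc$. Since $\dim\Hc<\infty$, the strong operator topology on $\End(\Hc)$ coincides with the norm topology, so $\U(\Hc)$ is nothing but the usual finite-dimensional compact Lie group, and its Lie algebra is the space $\Ug(\Hc)$ of skew-Hermitian operators, a finite-dimensional (hence closed) real linear subspace of the finite-dimensional normed algebra $\End(\Hc)$. The one ingredient I would quote is the classical fact that the natural map $\Lg(\U(\Hc))\to\Ug(\Hc)$, $\beta\mapsto\dot\beta(0)$, is a homeomorphism, with inverse $X\mapsto(t\mapsto\exp(tX))$; equivalently, every $\beta\in\Lg(\U(\Hc))$ has the form $\beta(t)=\exp(t\,\dot\beta(0))$ with $\dot\beta(0)$ depending continuously on $\beta$.

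Because $\Lg(\cdot)$ is a functor, the map $\Lg(\pi)\colon\Lg(G)\to\Lg(\U(\Hc))$, $\gamma\mapsto\pi\circ\gamma$, is continuous; composing it with the homeomorphism above shows that
$$\Lg(G)\to\Ug(\Hc)\subseteq\End(\Hc),\qquad\gamma\mapsto\de\pi(\gamma):=\frac{\de}{\de t}\Big|_{t=0}\pi(\gamma(t))$$
is a well-defined continuous map and that $\pi(\gamma(t))=\exp(t\,\de\pi(\gamma))$ for all $\gamma\in\Lg(G)$ and $t\in\RR$; in particular all the limits in Definition~\ref{deriv_repr} exist without any hypothesis on the vector. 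I would then prove by induction on $n\ge0$ that, for every $\gamma=(\gamma_1,\dots,\gamma_n)\in\Lg^n(G)$, the iterated derivative $D_\gamma(\pi(\cdot)v)$ exists and equals the map $g\mapsto\pi(g)\,\de\pi(\gamma_n)\cdots\de\pi(\gamma_1)v$, and that $D^n(\pi(\cdot)v)\colon G\times\Lg^n(G)\to\Hc$, $(g;\gamma_1,\dots,\gamma_n)\mapsto\pi(g)\,\de\pi(\gamma_n)\cdots\de\pi(\gamma_1)v$, is continuous. The base case $n=0$ is just the continuity of $\pi(\cdot)v$, which holds since $\pi$ is a representation. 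For the inductive step, writing $w:=\de\pi(\gamma_n)\cdots\de\pi(\gamma_1)v\in\Hc$, one computes, using \eqref{aux4_eq1},
$$\bigl(D_{\gamma_{n+1}}(D_\gamma(\pi(\cdot)v))\bigr)(g)=\lim_{t\to0}\frac{\pi(g\gamma_{n+1}(t))w-\pi(g)w}{t}=\pi(g)\,\de\pi(\gamma_{n+1})w,$$
the limit existing by the previous paragraph; continuity of $(g;\gamma_1,\dots,\gamma_{n+1})\mapsto\pi(g)\,\de\pi(\gamma_{n+1})\cdots\de\pi(\gamma_1)v$ then follows from continuity of $g\mapsto\pi(g)\in\End(\Hc)$, of each $\gamma_i\mapsto\de\pi(\gamma_i)\in\End(\Hc)$, of multiplication in $\End(\Hc)$, and of $T\mapsto Tv$. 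This gives $\pi(\cdot)v\in\Ci(G,\Hc)$ for every $v\in\Hc$, i.e.\ $\Hc_\infty=\Hc$.

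The only genuinely delicate point is the topological identification $\Lg(\U(\Hc))\cong\Ug(\Hc)$, specifically the continuity of $\beta\mapsto\dot\beta(0)$ and not merely of its inverse, which is where finite-dimensionality is really used; everything else is routine manipulation within the differential calculus of \cite{BCR81}. A shorter alternative, if one is willing to cite it, is the fact (also available from \cite{BCR81}) that any continuous homomorphism of a topological group into a finite-dimensional Lie group is of class $\Ci$: applied to $\pi\colon G\to\U(\Hc)\subseteq\End(\Hc)$ and composed with the continuous linear map $\End(\Hc)\to\Hc$, $T\mapsto Tv$, this yields $\pi(\cdot)v\in\Ci(G,\Hc)$ at once.
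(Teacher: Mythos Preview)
Your argument is correct. The paper takes a shorter and more abstract route: it observes that the identity map $\id_{U(\Hc)}\colon U(\Hc)\to U(\Hc)$, viewed as a representation of the finite-dimensional Lie group $U(\Hc)$, trivially has $\Hc_\infty(\id_{U(\Hc)})=\Hc$, and then invokes Proposition~\ref{pullback} with $\phi=\pi\colon G\to U(\Hc)$ to obtain $\Hc=\Hc_\infty(\id_{U(\Hc)})\subseteq\Hc_\infty(\id_{U(\Hc)}\circ\pi)=\Hc_\infty(\pi)$. Your alternative at the end is essentially this same idea, since the inclusion $\Hc_\infty(\pi)\subseteq\Hc_\infty(\pi\circ\phi)$ in Proposition~\ref{pullback} rests precisely on the $\Ci$ chain rule of \cite{BCR81} for continuous group homomorphisms. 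Your main argument, by contrast, unpacks what that chain rule would hide: you identify $\Lg(U(\Hc))$ with $\Ug(\Hc)$, compute the iterated derivatives $D^n(\pi(\cdot)v)$ explicitly, and verify joint continuity by hand. This buys concreteness and self-containment (you need only the classical fact $\Lg(U(\Hc))\cong\Ug(\Hc)$, not the full pullback machinery), at the cost of some length; the paper's approach buys a two-line proof by packaging the same content into Proposition~\ref{pullback}.
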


\begin{proof} 
Regarding the identity map $\id_{U(\Hc)}\colon U(\Hc)\to U(\Hc)$ as a representation 
of the Lie group $U(\Hc)$ (using the hypothesis $\dim\Hc<\infty$), 
it is clear that $\Hc_\infty(\id_{U(\Hc)})=\Hc$. 
On the other hand, Proposition~\ref{pullback} implies $\Hc_\infty(\id_{U(\Hc)})\subseteq\Hc_\infty(\id_{U(\Hc)}\circ\pi)
=\Hc_\infty(\pi)$, hence $\Hc\subseteq\Hc_\infty(\pi)$, and we are done. 
\end{proof}

\section{Moment sets for representations of locally compact groups}

In this section we consider only representations of locally compact groups, 
since this hypothesis allows us to use $C^*$-algebras in the study of their representation theory. 

\begin{notation}
\normalfont
If $G$ is any locally compact group  
and $\pi\colon G\to U(\Hc)$ is any representation, 
then we denote again by $\pi$ both its corresponding representation of the Banach $*$-algebra $L^1(G)$ 
and its extension to the group $C^*$-algebra $C^*(G)$, 
that is, the enveloping $C^*$-algebra of $L^1(G)$. 
The kernel of the $*$-representation $\pi\colon C^*(G)\to\Bc(\Hc)$ is denoted by $\Ker_{C^*}(\pi)$. 
\end{notation}

The proofs of Propositions \ref{half}--\ref{other} below are merely adaptations of the method of proof of \cite[Prop. 5.1]{AL92} 
using also some ideas from \cite[Sect. X.6]{Ne00}.

\begin{proposition}\label{half}
If $G$ is any locally compact group and $\pi_j\colon G\to U(\Hc_j)$ for $j=1,2$ are any representations 
with $\Ker_{C^*}(\pi_1)\supseteq\Ker_{C^*}(\pi_2)$, 
then $I_{\pi_1}\subseteq\overline{\conv(I^0_{\pi_2})}\subseteq\overline{\conv(I_{\pi_2})}$. 
\end{proposition}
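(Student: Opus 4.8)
The plan is to reduce the statement to the finite-dimensional case treated in \cite[Prop. 5.1]{AL92} by a direct-integral argument. First I would recall that the hypothesis $\Ker_{C^*}(\pi_1)\supseteq\Ker_{C^*}(\pi_2)$ means the $*$-representation $\pi_1$ of $C^*(G)$ factors through the quotient $C^*$-algebra $C^*(G)/\Ker_{C^*}(\pi_2)$; equivalently, $\pi_1$ is weakly contained in $\pi_2$, so every element of $I^0_{\pi_1}$ should be approximable, in the topology of pointwise convergence on $\Lg(G)$, by elements of $\conv(I^0_{\pi_2})$. Concretely, fix $v\in\Hc_{1,\infty}\setminus\{0\}$ and set $f:=\Psi_{\pi_1}(v)$. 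The matrix coefficient $\varphi_v(x):=\langle\pi_1(x)v,v\rangle$, as a state on $C^*(G)$, lies in the weak-$*$ closure of the convex hull of states of the form $\langle\pi_2(\cdot)w,w\rangle$ with $w\in\Hc_2$, $\|w\|=1$, by the standard theory of quotient $C^*$-algebras (a state annihilating $\Ker_{C^*}(\pi_2)$ is in the weak-$*$ closed convex hull of the $\pi_2$-vector states).

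Next I would promote this approximation from the level of states on $C^*(G)$ to the level of the derived representation. The value $f(\gamma)=\Psi_{\pi_1}(v)(\gamma)=\tfrac1\ie\langle\de\pi_1(\gamma)v,v\rangle/\langle v,v\rangle$ is not visible on $C^*(G)$ directly, but by Proposition~\ref{appr} the smooth vector $v$ can be taken (after an initial density reduction, using that $\Psi_{\pi_1}$ is continuous by Lemma~\ref{cont} and $I_{\pi_1}=\overline{I^0_{\pi_1}}$) of the form $v=\pi_1(h)v_0$ for some $h\in\Cc^\infty_0(G)$, $v_0\in\Hc_1$, and then for each fixed $\gamma\in\Lg(G)$,
$$\langle\de\pi_1(\gamma)v,v\rangle=\langle\pi_1(D^R_\gamma h)v_0,\pi_1(h)v_0\rangle=\langle\pi_1(h^*\ast D^R_\gamma h)v_0,v_0\rangle,$$
which \emph{is} a matrix coefficient evaluated at an element of $C^*(G)$ (in fact of $L^1(G)$). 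Here I use the hard technical ingredient: weak containment respects the passage to such convolution expressions, because $h^*\ast D^R_\gamma h\in\Ker_{C^*}(\pi_2)$ whenever it is in $\Ker_{C^*}(\pi_1)$ — so the bilinear functional on $\Hc_2$ given by $\langle\pi_2(h^*\ast D^R_\gamma h)\cdot,\cdot\rangle$ is controlled in the same way as for $\pi_1$. Running the state-approximation argument with $v_0$ (and with both $\langle v,v\rangle=\langle\pi_1(h^*\ast h)v_0,v_0\rangle$ in the denominator and the numerator treated simultaneously, over the finitely many $\gamma$'s one needs to test at once) shows that $f$, tested against any finite set $\gamma_1,\dots,\gamma_n\in\Lg(G)$, is approximated by a convex combination of functions $\Psi_{\pi_2}(w_i)$ restricted to that finite set. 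Since the topology on $\RR^{\Lg(G)}$ is pointwise convergence, this gives $f\in\overline{\conv(I^0_{\pi_2})}$, hence $I^0_{\pi_1}\subseteq\overline{\conv(I^0_{\pi_2})}$ and, taking closures, $I_{\pi_1}\subseteq\overline{\conv(I^0_{\pi_2})}\subseteq\overline{\conv(I_{\pi_2})}$.

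The main obstacle I anticipate is the transition in the previous paragraph from weak-$*$ approximation of states on $C^*(G)$ to simultaneous approximation of the finitely many numbers $\langle\de\pi_1(\gamma_i)v,v\rangle$, $i=1,\dots,n$, together with the normalization $\langle v,v\rangle$: one must package all $n+1$ sesquilinear forms into a single vector state on a matrix amplification $M_{n+1}(C^*(G))$ (or equivalently approximate a single state built from the tuple) so that a single convex combination of $\pi_2$-vector states works for all coordinates at once, and then divide by the (positive, bounded-below along the net) normalizing factor to land back in $\conv(I^0_{\pi_2})$ rather than merely its cone. This is exactly the bookkeeping that \cite[Prop. 5.1]{AL92} carries out in the finite-dimensional setting via direct integrals and Lemma~\ref{disintegr}; the role of separability is to make those direct-integral decompositions available, and I would cite that machinery rather than reprove it.
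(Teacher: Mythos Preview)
Your core argument (paragraphs two and three) is correct and is essentially the paper's proof: reduce via Proposition~\ref{appr} and Lemma~\ref{cont} to vectors $v=\pi_1(h)v_0$ with $h\in\Cc_0^\infty(G)$, rewrite $\langle\de\pi_1(\gamma)v,v\rangle=\langle\pi_1(h^*\ast D^R_\gamma h)v_0,v_0\rangle$ as a $C^*$-algebra matrix coefficient, and use weak containment to approximate.

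However, your opening sentence and final paragraph misidentify the machinery. Neither direct integrals, nor Lemma~\ref{disintegr}, nor separability play any role in this proposition --- those tools enter only in Proposition~\ref{other}, and Proposition~\ref{half} holds for \emph{arbitrary} locally compact~$G$. The ``main obstacle'' you describe (a single convex combination of $\pi_2$-vector states approximating the $\pi_1$-state simultaneously at finitely many elements of $C^*(G)$) needs no matrix amplification: it is precisely the content of \cite[Prop.~3.4.2(i)]{Di64}, which for any finite $F\subseteq C^*(G)$ and $\varepsilon>0$ produces $\eta_1,\dots,\eta_n\in\Hc_2$ with $\sum_k\|\eta_k\|^2=1$ and $\bigl|\langle\pi_1(a)v_0,v_0\rangle-\sum_k\langle\pi_2(a)\eta_k,\eta_k\rangle\bigr|<\varepsilon$ for all $a\in F$ at once. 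The paper in fact takes $F=\{h^*\ast h,\ h^*\ast D^R_\gamma h\}$ for a \emph{single} $\gamma$ and closes with a Hahn-Banach separation argument (every continuous linear functional on $\Lg(G)^*$ with the weak-$*$ topology is evaluation at some $\gamma\in\Lg(G)$); it then remarks that your variant with a finite set of $\gamma$'s would have worked equally well. So drop the reduction-to-finite-dimensional framing and the appeal to disintegration, and cite \cite[Prop.~3.4.2(i)]{Di64} directly.
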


\begin{proof} 
The proof proceeds in three steps. 

Step 1. 
By Lemma~\ref{cont}, it suffices to check that for every vector $w$ in some dense subset of $\Hc_{1,\infty}$ 
one has 
\begin{equation}\label{half_proof_eq1}
\Psi_{\pi_1}(w)\in \overline{\conv(I^0_{\pi_2})}. 
\end{equation} 
To this end we will use the dense subset of $\Hc_{1,\infty}$ provided by Proposition~\ref{appr}, 
hence we will check \eqref{half_proof_eq1} for $w=\pi_1(f)v$, where $v\in\Hc_{1,\infty}$ with $\Vert v\Vert=1$, 
and $f\in\Cc_0^\infty(G)$. 

Step 2. At this step we fix $\gamma\in\Lg(G)$, 
and let $\varepsilon>0$ be also fixed for the moment.  
Since $\Ker_{C^*}(\pi_1)\supseteq\Ker_{C^*}(\pi_2)$, it follows by \cite[Prop. 3.4.2(i)]{Di64} 
that for every finite set $F\subseteq C^*(G)$ there exist an integer $n\ge 1$ and some vectors $\eta_1,\dots,\eta_n\in\Hc_2$ 
with $\sum\limits_{k=1}^n\Vert\eta_k\Vert^2=1$ and for all $a\in F$, 
\begin{equation}\label{half_proof_eq2}
\vert\langle\pi_1(a)v,v\rangle-\sum_{k=1}^n\langle\pi_2(a)\eta_k,\eta_k\rangle\vert<\varepsilon. 
\end{equation}
We use the above property for $F=\{f^*\ast f,f^*\ast D^R_\gamma f\}$, 
using the notation~\eqref{aux4_eq1_R}.  
For any $u\in\Hc_{j,\infty}$ and $a=f^*\ast D^R_\gamma f$ one has by \eqref{appr_eq1}, 
$$\begin{aligned}
\langle\pi_j(a)u,u\rangle
&=\langle\pi_j(f^*\ast D^R_\gamma f)u,u\rangle 
=
\langle\pi_j(D^R_\gamma f)u,\pi_j(f)u\rangle 
=\langle\de\pi_j(\gamma)\pi_j(f)u,\pi_j(f)u\rangle \\
&=\ie\Vert \pi_j(f)u \Vert^2(\Psi_{\pi_j}(\pi_j(f)u))(\gamma)
\end{aligned}$$ 
hence \eqref{half_proof_eq2} implies 
$$\Bigl\vert(\Psi_{\pi_1}(\pi_1(f)v))(\gamma)-\sum_{k=1}^m\frac{\Vert\pi_2(f)\eta_k\Vert^2}{\Vert\pi_1(f)v\Vert^2}(\Psi_{\pi_2}(\pi_2(f)\eta_k))(\gamma)\Bigr\vert<
\frac{\varepsilon}{\Vert\pi_1(f)v\Vert^2}. $$
Then 
\begin{equation}\label{half_proof_eq3}
\Bigl\vert(\Psi_{\pi_1}(\pi_1(f)v))(\gamma)
-\delta_\varepsilon\sum_{k=1}^m t_k(\Psi_{\pi_2}(\pi_2(f)\eta_k))(\gamma)\Bigr\vert<
\frac{\varepsilon}{\Vert\pi_1(f)v\Vert^2} 
\end{equation}	
where $t_k:=\Vert\pi_2(f)\eta_k\Vert^2/\sum\limits_{\ell=1}^n\Vert\pi_2(f)\eta_\ell\Vert^2$ 
and $\delta_\varepsilon:=\Bigl(\sum\limits_{\ell=1}^n\Vert\pi_2(f)\eta_\ell\Vert^2\Bigr)/\Vert\pi_1(f)v\Vert^2$, 
hence $t_1,\dots,t_n\in[0,1]$ and $t_1+\cdots+t_n=1$. 
In particular $\psi_\varepsilon:=\sum\limits_{k=1}^m t_k\Psi_{\pi_2}(\pi_2(f)\eta_k)\in\conv(I^0_{\pi_2})$. 

On the other hand, using \eqref{half_proof_eq2} for $a=f^*\ast f$, one obtains 
$$\Bigl\vert\Vert \pi_1(f)v\Vert^2-\sum\limits_{\ell=1}^n\Vert\pi_2(f)\eta_\ell\Vert^2\Bigr\vert<\varepsilon$$
that is, $\vert 1-\delta_\varepsilon\vert<\varepsilon/\Vert\pi_1(f)v\Vert^2$. 
Consequently, also taking into account
\eqref{half_proof_eq3}, we see that for $w=\pi_1(f)v$ and arbitrary $\gamma\in\Lg(G)$, 
there exist families $\{\delta_\varepsilon\}_{\varepsilon>0}$ in $(0,\infty)$ 
and $\{\psi_\varepsilon\}_{\varepsilon>0}$ in $\conv(I^0_{\pi_2})$ satisfying 
$\lim\limits_{\varepsilon\to0} \delta_\varepsilon=1$ and 
$\lim\limits_{\varepsilon\to0}\delta_\varepsilon\psi_\varepsilon(\gamma)=(\Psi_{\pi_1}(w))(\gamma)$, 
hence 
\begin{equation}\label{half_proof_eq4}
\lim\limits_{\varepsilon\to0}\psi_\varepsilon(\gamma)=(\Psi_{\pi_1}(w))(\gamma). 
\end{equation}
We will use this fact in the next step of the proof in order to obtain \eqref{half_proof_eq1}. 

Step 3. 
Assume that \eqref{half_proof_eq1} does not hold for $w=\pi_1(f)v$ as above. 
Since $\overline{\conv(I^0_{\pi_2})}$ is a closed convex subset 
of the locally convex space $\Lg(G)^*$ (see Lemmas \ref{lc-pro} and \ref{lincont}), 
it follows by \cite[Th. 3.4(b)]{Ru91} that there exist a real number $t>0$ 
a continuous linear functional $\Gamma\colon \Lg(G)^*\to \RR$ 
with 
$$\Gamma(\Psi_{\pi_1}(w)))+t<\Gamma(\psi)\text{ for all }\psi\in \overline{\conv(I^0_{\pi_2})}.$$ 
Since $\Lg(G)^*$ is a topological subspace of $\RR^{\Lg(G)}$, 
hence is endowed with the topology of pointwise convergence on $\Lg(G)$, 
it follows by \cite[Th. 3.10]{Ru91} that there exists $\gamma\in\Lg(G)$ 
such that $\Gamma(\psi)=\psi(\gamma)$ for all $\psi\in \Lg(G)^*$, 
and therefore 
$$(\Psi_{\pi_1}(w))(\gamma)+t<\psi(\gamma)\text{ for all }\psi\in \overline{\conv(I^0_{\pi_2})}.$$ 
If we use the above property for $\psi=\psi_\varepsilon$, we obtain a contradiction with \eqref{half_proof_eq4}, 
and we are done. 
\end{proof}

In connection with the proof of Proposition~\ref{half} we note that the use of Lemmas \ref{lc-pro} and \ref{lincont} 
could have been avoided by using \cite[Prop. 3.4.2(i)]{Di64} 
for a larger finite set $F=\{f^*\ast f\}\cup\{f^*\ast D^R_\gamma f\mid \gamma\in L_0\}$,  
with an arbitrary finite set $L_0\subseteq\Lg(G)$, 
since the continuous linear functionals on $\RR^{\Lg(G)}$ are the linear combinations of evaluations 
at arbitrary finte subsets of $\Lg(G)$. 

We now record a useful disintegration property of cyclic $*$-representations of separable $C^*$-algebras. 
This is essentially again a by-product of the proof of \cite[Prop. 5.1]{AL92}. 

\begin{lemma}\label{disintegr}
Let $\pi\colon\Ac\to\Bc(\Hc)$ be any cyclic $*$-representation of a separable $C^*$-algebra. 
Then there exist a compact metric space $Z$ 
with a probability Radon measure $\mu$, 
a measurable field of Hilbert spaces $\{\Hc_\zeta\}_{\zeta\in Z}$, 
a vector $\int^\oplus w_\zeta\de\mu(\zeta)\in \int^\oplus \Hc_\zeta\de\mu(\zeta)$, 
a measurable field of irreducible $*$-representations $\{\sigma_\zeta\colon \Ac\to U(\Hc_\zeta)\}_{\zeta\in Z}$ 
and a unitary operator $V\colon \Hc\to  \int^\oplus \Hc_\zeta\de\mu(\zeta)$ satisfying the following conditions: 
\begin{enumerate}
\item\label{disintegr_item1} 
One has  $V\pi(\cdot)V^{-1}=\int^\oplus\sigma_\zeta(\cdot)\de\mu(\zeta)$ 
and $w$ is a cyclic vector for that representation. 
\item\label{disintegr_item2} 
For every $a,b\in\Ac$ the function 
$\zeta\mapsto \langle \sigma_\zeta(a)w_\zeta,\sigma_\zeta(b)w_\zeta\rangle$ is continuous on $Z$. 
\end{enumerate}
\end{lemma}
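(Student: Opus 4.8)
The plan is to derive this from the standard theory of direct integral decompositions of representations of separable $C^*$-algebras, the key point being that one must keep careful track of a cyclic vector and arrange for the ``matrix coefficient'' functions $\zeta\mapsto\langle\sigma_\zeta(a)w_\zeta,\sigma_\zeta(b)w_\zeta\rangle$ to be \emph{continuous} (not merely measurable) on a compact metric model space. First I would invoke the GNS picture: since $\pi$ is cyclic, fix a unit cyclic vector $w_0\in\Hc$ and let $\omega(a):=\langle\pi(a)w_0,w_0\rangle$ be the associated state on $\Ac$. The state space $S(\Ac)$ is weak-$*$ compact, and since $\Ac$ is separable it is metrizable; moreover by the Choquet/barycenter theory (or Tomita's decomposition theorem for states, cf.\ \cite[8.8]{Di64}) there is a probability Radon measure $\mu$ on $Z:=S(\Ac)$ whose barycenter is $\omega$ and which is carried by the pure state space, i.e.\ supported on the $\zeta$ with $\sigma_\zeta$ irreducible. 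The direct integral $\int^\oplus\Hc_\zeta\,\de\mu(\zeta)$ is built from the GNS construction applied pointwise: $\Hc_\zeta:=\Hc_{\zeta}$ is the GNS space of the pure state $\zeta$, $\sigma_\zeta$ its GNS representation, and $w_\zeta$ the canonical cyclic unit vector, so that $\langle\sigma_\zeta(a)w_\zeta,w_\zeta\rangle=\zeta(a)$.

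Next I would produce the unitary $V$. Using separability of $\Ac$ one checks that $\{\Hc_\zeta\}$, $\{\sigma_\zeta\}$, $\{w_\zeta\}$ form a measurable field (countably many $a_i$ dense in $\Ac$ give generating measurable vector fields $\zeta\mapsto\sigma_\zeta(a_i)w_\zeta$), and then the map $\pi(a)w_0\mapsto\int^\oplus\sigma_\zeta(a)w_\zeta\,\de\mu(\zeta)$ is well defined and isometric on the dense subspace $\pi(\Ac)w_0$ precisely because $\|\pi(a)w_0\|^2=\omega(a^*a)=\int_Z\zeta(a^*a)\,\de\mu(\zeta)=\|\int^\oplus\sigma_\zeta(a)w_\zeta\,\de\mu\|^2$ by the barycenter identity; it extends to the desired unitary $V$, intertwines the representations, and carries $w_0$ to $w:=\int^\oplus w_\zeta\,\de\mu(\zeta)$, which is then cyclic for $\int^\oplus\sigma_\zeta\,\de\mu$. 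That gives assertion~\eqref{disintegr_item1}. Finally, assertion~\eqref{disintegr_item2} is immediate from the construction of $Z$ as a space of states: for fixed $a,b\in\Ac$ one has $\langle\sigma_\zeta(a)w_\zeta,\sigma_\zeta(b)w_\zeta\rangle=\langle\sigma_\zeta(b^*a)w_\zeta,w_\zeta\rangle=\zeta(b^*a)$, and $\zeta\mapsto\zeta(b^*a)$ is by definition continuous on $Z=S(\Ac)$ with the weak-$*$ topology.

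The main obstacle — and the only genuinely delicate point — is the simultaneous realization of $Z$ as a \emph{compact metric} space and the requirement that $\mu$ be supported on the irreducible part, while keeping the matrix-coefficient functions continuous. Measurable direct-integral decompositions over $\widehat{\Ac}$ or the factor spectrum are standard (\cite[Ch. 8]{Di64}), but those base spaces need not be Hausdorff, let alone compact metric. The remedy, which is exactly the device used in \cite[Prop.\ 5.1]{AL92}, is to take the base space to be (a suitable closed subset of) the pure state space $P(\Ac)$ inside the compact metrizable ball of $\Ac^*$: the decomposition of $\omega$ as a barycenter of pure states lives there, pointwise GNS makes everything a genuine measurable field over this metric space, and continuity of $\zeta\mapsto\zeta(b^*a)$ is built in. One should take a little care that distinct $\zeta$'s may give unitarily equivalent $\sigma_\zeta$ (harmless) and that the field of GNS spaces is honestly measurable (clear from a countable dense subset of $\Ac$), but no further subtlety arises. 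I would cite \cite{Di64} for the existence of the barycentric measure on the metrizable state space and for the pointwise GNS construction, and present the rest as a routine assembly.
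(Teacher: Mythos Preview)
Your approach is essentially the paper's: pass to the state $\omega$ of the cyclic vector, represent $\omega$ as a barycenter of pure states on the compact metrizable state space, build the direct integral by pointwise GNS, and read off continuity of $\zeta\mapsto\langle\sigma_\zeta(a)w_\zeta,\sigma_\zeta(b)w_\zeta\rangle=\zeta(b^*a)$ from the weak-$*$ topology.

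There is, however, a genuine gap in your construction of $V$. Your barycenter computation correctly shows that $\pi(a)w_0\mapsto\int^\oplus\sigma_\zeta(a)w_\zeta\,\de\mu(\zeta)$ is an isometry, but you then assert without argument that it ``extends to the desired unitary $V$''. Surjectivity is \emph{not} automatic for an arbitrary Choquet boundary measure with barycenter $\omega$. For a concrete failure, take $\Ac=M_2(\CC)$ and $\omega=\tfrac{1}{2}\Tr$, and decompose $\omega$ via the rotation-invariant probability measure on the sphere of vector states: the GNS space of $\omega$ is $4$-dimensional, while $\int^\oplus\Hc_\zeta\,\de\mu\cong L^2(S^2;\CC^2)$ is infinite-dimensional, so your $V$ is a proper isometry. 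What is needed is that $\mu$ be an \emph{orthogonal} measure in the sense of Tomita, for which this surjectivity is precisely the content of \cite[4.4.9]{BR87}; and one must simultaneously arrange that $\mu$ is pseudosupported on the pure states. The paper secures both at once by taking a \emph{maximal} orthogonal measure via \cite[4.1.3, 4.1.11, 4.1.25, 4.2.5]{BR87}. Your two alternative citations each supply only half of what is required: a bare Choquet measure is supported on pure states but need not be orthogonal, while the central decomposition of \cite[8.8]{Di64} is orthogonal but supported on factorial (not necessarily pure) states. Contrary to your diagnosis, this---not the compact-metric bookkeeping---is the delicate point.
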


\begin{proof}
Extending $\pi$ to the unitization of $\Ac$ if necessary, we may assume that $\Ac$ is unital. 
Let $w\in\Hc$ be any cyclic vector for $\pi$ with $\Vert v\Vert=1$, 
and define the state $\varphi\colon\Ac\to\CC$, $\varphi(\cdot):=\langle\pi(\cdot)v,v\rangle$. 
Denoting by $S$ the set of all states of $\Ac$, viewed as a compact metrizable space with the weak-topology 
(since $\Ac$ is separable), 
it follows by 
\cite[4.1.3, 4.1.11, 4.1.25, 4.2.5]{BR87} that there exists a probability Radon measure $\mu$ on $S$ which is an orthogonal measure and satisfies  
$$\varphi=\int_S\zeta\de\mu(\zeta) $$
and $\mu(P)=1$, where $P$ is the set of pure states of $\Ac$. 
Since $\mu$ is an orthogonal measure and $\pi$ is unitary equivalent to the GNS representation of $\Ac$ associated with the state $\varphi$ (since $v$ is a cyclic vector), it follows by \cite[4.4.9]{BR87} that 
there exists a unitary operator 
$V\colon \Hc\to  \int^\oplus \Hc_\zeta\de\mu(\zeta)$ 
with $V\pi(\cdot)V^{-1}=\int^\oplus\sigma_\zeta(\cdot)\de\mu(\zeta)$ 
and $Vw=\int^\oplus w_\zeta\de\mu(\zeta)$, 
where for every $\zeta\in S$ one denotes by $\sigma_\zeta\colon\Ac\to\Bc(\Hc_\zeta)$ its corresponding GNS representation with the unit cyclic vector $w_\zeta\in\Hc_\zeta$ with $\zeta(\cdot)=\langle\sigma_\zeta(\cdot)w_\zeta,w_\zeta\rangle$. 
Then for every $a,b\in\Ac$ and $\zeta\in S$ one has 
$$\langle \sigma_\zeta(a)w_\zeta,\sigma_\zeta(b)w_\zeta\rangle=\langle\sigma_\zeta(b^*a)w_\zeta,w_\zeta\rangle=\zeta(b^*a)$$
and this depends continuously on $\zeta\in S$ since $S$ is endowed with the weak topology. 
Then the conclusion is obtained if we define $Z$ as the closure of $P$ in $S$.  
\end{proof}

\begin{proposition}\label{other}
Let $G$ be any separable locally compact group and $\pi\colon G\to U(\Hc)$ be any representation 
such that $I_{\sigma}$ is convex for every irreducible representation 
$\sigma\colon G\to U(\Hc_\sigma)$ with $\Ker_{C^*}(\sigma)\supseteq\Ker_{C^*}(\pi)$. 
Then $I_\pi$ is convex. 
\end{proposition}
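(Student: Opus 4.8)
The plan is to reduce to cyclic representations, disintegrate them into irreducibles by Lemma~\ref{disintegr}, and then use the pointwise continuity of $\zeta\mapsto\langle\sigma_\zeta(\cdot)w_\zeta,w_\zeta\rangle$ to manufacture smooth vectors of $\pi$ whose moment values approximate arbitrary convex combinations drawn from the sets $I_{\sigma_\zeta}$. First I would reduce to the case where $\pi$ is cyclic: decomposing $\Hc$ as an orthogonal Hilbert sum of $\pi(C^*(G))$-cyclic subspaces, each corresponding subrepresentation $\tau$ satisfies $\Ker_{C^*}(\pi)\subseteq\Ker_{C^*}(\tau)$, so every irreducible $\sigma$ with $\Ker_{C^*}(\sigma)\supseteq\Ker_{C^*}(\tau)$ also has $\Ker_{C^*}(\sigma)\supseteq\Ker_{C^*}(\pi)$ and hence $I_\sigma$ convex; thus each cyclic piece $\tau$ inherits the hypothesis, and if every such $I_\tau$ is convex then $I_\pi$ is convex by Proposition~\ref{ortinfty}. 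So from now on $\pi$ is cyclic.

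Since $G$ is separable the $C^*$-algebra $C^*(G)$ is separable, and Lemma~\ref{disintegr} applied to $\pi\colon C^*(G)\to\Bc(\Hc)$ supplies a compact metric space $Z$, a probability Radon measure $\mu$, irreducible $\sigma_\zeta\colon C^*(G)\to U(\Hc_\zeta)$, unit cyclic vectors $w_\zeta\in\Hc_\zeta$, and a unitary $V$ with $V\pi(\cdot)V^{-1}=\int^\oplus_Z\sigma_\zeta(\cdot)\,\de\mu(\zeta)$; replacing $Z$ by $\supp\mu$ we may assume $\mu$ has full support. From $\pi(a)=\int^\oplus\sigma_\zeta(a)\,\de\mu(\zeta)$ and the separability of $C^*(G)$ there is a $\mu$-null set $N_0$ with $\Ker_{C^*}(\sigma_\zeta)\supseteq\Ker_{C^*}(\pi)$ for all $\zeta\notin N_0$, so (by hypothesis, regarding each $\sigma_\zeta$ also as a continuous unitary representation of $G$) $I_{\sigma_\zeta}$ is convex for such $\zeta$. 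Fix a countable $D\subseteq Z\setminus N_0$ dense in $Z$ (possible since $Z\setminus N_0$ has full measure and $\mu$ has full support) and set $\rho:=\bigoplus_{\zeta\in D}\sigma_\zeta$; by Proposition~\ref{ortinfty}, $I_\rho$ is closed and convex, and in fact $I_\rho=\overline{\conv\bigl(\bigcup_{\zeta\in D}I^0_{\sigma_\zeta}\bigr)}$. Moreover $\Ker_{C^*}(\rho)=\Ker_{C^*}(\pi)$: indeed $\Ker_{C^*}(\pi)\subseteq\Ker_{C^*}(\sigma_\zeta)$ for $\zeta\in D$, while if $\sigma_\zeta(a)=0$ for all $\zeta\in D$ then for every $b\in C^*(G)$ the function $\zeta\mapsto\langle\sigma_\zeta(ab)w_\zeta,\sigma_\zeta(ab)w_\zeta\rangle$ is continuous on $Z$ by Lemma~\ref{disintegr}\eqref{disintegr_item2} and vanishes on the dense set $D$, hence vanishes on $Z$, so by cyclicity of the $w_\zeta$ we get $\sigma_\zeta(a)=0$ for all $\zeta$ and thus $\pi(a)=0$.

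Because $\Ker_{C^*}(\pi)=\Ker_{C^*}(\rho)$, Proposition~\ref{half} gives $I_\pi\subseteq\overline{\conv(I^0_\rho)}=I_\rho$, the last equality holding since $I_\rho$ is already closed and convex. The content is the reverse inclusion $I_\rho\subseteq I_\pi$. As $I_\pi$ is closed and $I_\rho=\overline{\conv(\bigcup_{\zeta\in D}I^0_{\sigma_\zeta})}$, it suffices to show $\sum_{i=1}^n t_i\Psi_{\sigma_{\zeta_i}}(u_i)\in I_\pi$ for any $\zeta_1,\dots,\zeta_n\in D$, vectors $u_i\in\Hc_{\sigma_{\zeta_i},\infty}\setminus\{0\}$, and $t_i\ge0$ with $\sum_i t_i=1$. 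By Proposition~\ref{appr}, Lemma~\ref{cont} and cyclicity of the $w_{\zeta_i}$ we may take $u_i=\sigma_{\zeta_i}(g_i)w_{\zeta_i}$ with $g_i\in\Cc^\infty_0(G)$. Pick pairwise disjoint open neighborhoods $U_i\ni\zeta_i$ in $Z$ (so $\mu(U_i)>0$), let $P_{U_i}$ be the decomposable projection corresponding under $V$ to multiplication by $\chi_{U_i}$ — it commutes with $\pi(C^*(G))$, hence maps $\Hc_\infty$ into $\Hc_\infty$ by Remark~\ref{intert} and commutes with $\de\pi(\gamma)$ there — and put $v:=\sum_{i=1}^n c_i\,P_{U_i}\pi(g_i)w$, where $w=V^{-1}\int^\oplus w_\zeta\,\de\mu(\zeta)$ and $c_i^2:=t_i/(\mu(U_i)\Vert u_i\Vert^2)$ if $t_i>0$, $c_i:=0$ otherwise. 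Then $v\in\Hc_\infty$ by Proposition~\ref{appr}, and using \eqref{appr_eq1} and the disjointness of the $U_i$ one gets, for every $\gamma\in\Lg(G)$,
\begin{align*}
\langle\de\pi(\gamma)v,v\rangle &=\sum_{i=1}^n c_i^2\int_{U_i}\langle\sigma_\zeta(g_i^*\ast D^R_\gamma g_i)w_\zeta,w_\zeta\rangle\,\de\mu(\zeta),\\
\Vert v\Vert^2 &=\sum_{i=1}^n c_i^2\int_{U_i}\Vert\sigma_\zeta(g_i)w_\zeta\Vert^2\,\de\mu(\zeta).
\end{align*}
By Lemma~\ref{disintegr}\eqref{disintegr_item2} the functions $\zeta\mapsto\langle\sigma_\zeta(g_i^*\ast D^R_\gamma g_i)w_\zeta,w_\zeta\rangle=\ie\Vert\sigma_\zeta(g_i)w_\zeta\Vert^2(\Psi_{\sigma_\zeta}(\sigma_\zeta(g_i)w_\zeta))(\gamma)$ and $\zeta\mapsto\Vert\sigma_\zeta(g_i)w_\zeta\Vert^2$ are continuous; since $c_i^2\mu(U_i)=t_i/\Vert u_i\Vert^2$ does not depend on $U_i$, shrinking the $U_i$ to $\{\zeta_i\}$ makes $\Vert v\Vert^2\to\sum_i t_i=1$ and $\tfrac1\ie\langle\de\pi(\gamma)v,v\rangle/\Vert v\Vert^2\to\sum_i t_i(\Psi_{\sigma_{\zeta_i}}(u_i))(\gamma)$, and this can be arranged simultaneously for any prescribed finite set of $\gamma$'s. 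Hence $\sum_i t_i\Psi_{\sigma_{\zeta_i}}(u_i)\in\overline{I^0_\pi}=I_\pi$, so $I_\rho\subseteq I_\pi$ and therefore $I_\pi=I_\rho$ is convex.

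The main obstacle is precisely this last step. Proposition~\ref{half} only bounds $I_\pi$ from above, up to closed convex hulls, so the convexity of $I_\pi$ genuinely has to be produced rather than deduced from weak containment; the device making it possible is the joint use of the decomposable projections $P_{U_i}$, which preserve smooth vectors, together with the pointwise continuity in Lemma~\ref{disintegr}\eqref{disintegr_item2} — which is exactly why that continuity property was built into the disintegration lemma. A secondary technical point, handled by Proposition~\ref{appr} and cyclicity, is the reduction of $I^0_{\sigma_\zeta}$ to moment values at vectors of the special form $\sigma_\zeta(g)w_\zeta$ with $g\in\Cc^\infty_0(G)$.
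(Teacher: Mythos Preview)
Your proof is correct and follows essentially the same route as the paper's: reduce to the cyclic case, disintegrate via Lemma~\ref{disintegr}, pass to a direct sum of fibers whose $C^*$-kernel agrees with $\Ker_{C^*}(\pi)$, apply Proposition~\ref{half} for one inclusion, and then manufacture localized smooth vectors in $\Hc$ (exploiting the continuity in Lemma~\ref{disintegr}\eqref{disintegr_item2}) to obtain the reverse inclusion. The only differences are implementation details --- the paper takes the full sum $\bigoplus_{\zeta\in Z\setminus Z_0}\sigma_\zeta$ and cites \cite[Th.~X.6.16(ii)]{Ne00} for the kernel equality, invokes Kadison's transitivity theorem to realize arbitrary $v_{\zeta_j}$ as $\sigma_{\zeta_j}(a_j)w_{\zeta_j}$ with $a_j\in C^*(G)$, and localizes with $L^2$-normalized continuous bump functions on $Z$, whereas you use a countable dense $D\subseteq Z\setminus N_0$ and argue the kernel equality directly from continuity, take $u_i=\sigma_{\zeta_i}(g_i)w_{\zeta_i}$ with $g_i\in\Cc_0^\infty(G)$, and localize with the commuting projections~$P_{U_i}$.
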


\begin{proof}
Let $\pi=\bigoplus_{j\in J}\pi_j$ be a decomposition of $\pi\colon C^*(G)\to\Bc(\Hc)$ 
into cyclic representations. 
By Proposition~\ref{ortinfty}, it suffices to prove that for every $j_0\in J$ the set $I_{\pi_{j_0}}$ is convex. 
One has $\Ker\pi=\bigcap\limits_{j\in J}\Ker\pi_j\subseteq\Ker\pi_{j_0}$,  
hence every $\sigma\colon G\to U(\Hc_\sigma)$ with $\Ker_{C^*}(\sigma)\supseteq\Ker_{C^*}(\pi_{j_0})$  
also satisfies $\Ker_{C^*}(\sigma)\supseteq\Ker_{C^*}(\pi)$, hence $I_{\sigma}$ is convex by hypothesis. 
Thus $\pi_{j_0}$ satisfies the condition from the hypothesis and then, 
replacing $\pi$ by $\pi_{j_0}$, we may assume that $\pi$ is a cyclic representation. 

But then, using Lemma~\ref{disintegr}, 
we may also assume that there exist a compact metric space $Z$ 
with a positive measure $\mu$, 
a measurable field of Hilbert spaces $\{\Hc_\zeta\}_{\zeta\in Z}$ and a measurable field of irreducible representations $\{\sigma_\zeta\colon G\to U(\Hc_\zeta)\}_{\zeta\in Z}$ 
such that $\Hc=\int^\oplus\Hc_\zeta\de\mu(\zeta)$ and $\pi=\int^\oplus\sigma_\zeta\de\mu(\zeta)$. 

Using the method of proof of \cite[Th. X.6.16(ii)]{Ne00}, 
one can find a $\mu$-null set $Z_0\subseteq Z$ with $\Ker_{C^*}(\pi)=\Ker_{C^*}(\widetilde{\pi})$, 
where $\widetilde{\pi}:=\bigoplus\limits_{\zeta\in Z\setminus Z_0}\sigma_\zeta$. 
Then the representation $\widetilde{\pi}$ is convex by Proposition~\ref{ortinfty},  
and by Proposition~\ref{half} one further obtains $I_\pi\subseteq I_{\widetilde{\pi}}$. 

To prove that $I_{\pi}$ is convex, we will now check the converse inclusion $I_\pi\supseteq I_{\widetilde{\pi}}$. 
Using Proposition~\ref{ortinfty} and its proof, it suffices to prove that for every finite set 
$F=\{\zeta_1,\dots,\zeta_m\}\subseteq Z\setminus Z_0$ 
the representation $\widetilde{\pi}_F:=\bigoplus\limits_{j=1}^m\sigma_{\zeta_j}$ 
satisfies $I_{\widetilde{\pi}_F}\subseteq I_{\pi}$. 
To this end, using Proposition~\ref{appr} and Lemma~\ref{cont}, 
it suffices to prove that for arbitrary 
$v=v_{\zeta_1}\oplus\cdots\oplus v_{\zeta_m}\in \bigoplus\limits_{j=1}^m\Hc_{\zeta_j}$ 
and $f\in \Cc_0^\infty(G)$ with $\Vert \widetilde{\pi}_F(f)v\Vert=1$ 
one has 
\begin{equation}\label{other_proof_eq-1}
\Psi_{\widetilde{\pi}_F}(\widetilde{\pi}_F(f)v)\in I_{\pi}. 
\end{equation}
Using \eqref{ortsum_proof_eq1} and $\Vert \widetilde{\pi}_F(f)v\Vert=1$, one obtains 
\begin{align}
\Psi_{\widetilde{\pi}_F}(\widetilde{\pi}_F(f)v)
&=\Psi_{\widetilde{\pi}_F}(\sigma_{\zeta_1}(f)v_{\zeta_1}\oplus\cdots\oplus \sigma_{\zeta_m}(f)v_{\zeta_m}) \nonumber \\
&\label{other_proof_eq0}
=\sum_{j=1}^m\Vert \sigma_{\zeta_j}(f)v_{\zeta_j}\Vert^2\Psi_{\sigma_{\zeta_j}}(\sigma_{\zeta_j}(f)v_{\zeta_j}) 
\end{align}
For every $\varepsilon>0$ let $h^\varepsilon:=(h^\varepsilon_1,\dots,h^\varepsilon_m)\in \Cc(Z)^m$ with 
$0\le h^\varepsilon_j$ on $Z$, $h^\varepsilon_j=1$ on some neighborhood of $\zeta_j\in Z$, 
$\supp h^\varepsilon_j$ contained in the open ball $B_{\zeta_j}(\varepsilon)$ in $Z$ 
of center $\zeta_j$ and radius~$\varepsilon$, 
$\Vert h^\varepsilon_j\Vert_{L^2(Z,\mu)}=1$, 
and  
$(\supp h^\varepsilon_j)\cap(\sup h^\varepsilon_k)=\emptyset$
if $j\ne k$. 
Since the representation $\sigma_{\zeta_j}\colon C^*(G)\to\Bc(\Hc_{\zeta_j})$ is irreducible, 
it follows by Kadison's transitivity theorem \cite[Cor. 2.8.4]{Di64} 
that there exists $a_j\in C^*(G)$ with $\sigma_{\zeta_j}(a_j)w_{\zeta_j}=v_{\zeta_j}$, 
where $\int^\oplus w_\zeta\de\mu(\zeta)\in\Hc$ is the unit vector 
given by Lemma~\ref{disintegr}.  
Define 
$$v^\varepsilon:=\sum_{j=1}^m\int^\oplus_Z h^\varepsilon_j(\zeta)\sigma_\zeta(a_j)w_\zeta\de\mu(\zeta)\in\Hc,$$
hence 
\begin{equation}\label{other_proof_eq1}
\pi(f)v^\varepsilon=\sum_{j=1}^m\int^\oplus_Z h^\varepsilon_j(\zeta)\sigma_\zeta(f)\sigma_\zeta(a_j)w_\zeta\de\mu(\zeta). 
\end{equation}
Since $(\supp h^\varepsilon_j)\cap(\sup h^\varepsilon_k)=\emptyset$
for $j\ne k$, it follows that 
$$\begin{aligned}
\Vert \pi(f)v_\varepsilon\Vert^2
&=\sum_{j=1}^m\int_Z h^\varepsilon_j(\zeta)^2\Vert \sigma_{\zeta}(f)\sigma_\zeta(a_j)w_\zeta\Vert^2\de\mu(\zeta) \\
&=\sum_{j=1}^m\Vert \sigma_{\zeta_j}(f)v_{\zeta_j}\Vert^2
+\sum_{j=1}^m\int_Z h^\varepsilon_j(\zeta)^2
(\Vert \sigma_{\zeta}(fa_j)w_\zeta\Vert^2-\Vert \sigma_{\zeta_j}(fa_j)w_{\zeta_j}\Vert^2)\de\mu(\zeta)
\end{aligned}$$
since $\sigma_{\zeta_j}(fa_j)w_{\zeta_j}=\sigma_{\zeta_j}(f)\sigma_{\zeta_j}(a_j)w_{\zeta_j}=\sigma_{\zeta_j}(f)v_{\zeta_j}$ and $\Vert h^\varepsilon_j\Vert_{L^2(Z,\mu)}=1$. 
Now recalling that $\supp h^\varepsilon_j\subseteq B_{\zeta_j}(\varepsilon)$ 
and the functions $\zeta\mapsto\Vert \sigma_\zeta(fa_j)w_\zeta\Vert$ are continuous for $j=1,\dots,m$ 
(see Lemma~\ref{disintegr}\eqref{disintegr_item2}), 
and using Lebesgue's dominated convergence theorem, 
it follows that 
\begin{equation}\label{other_proof_eq2}
\lim_{\varepsilon\to 0}\Vert \pi(f)v_\varepsilon\Vert^2=\sum_{j=1}^m\Vert \sigma_{\zeta_j}(f)v_{\zeta_j}\Vert^2
=\Vert \widetilde{\pi}_F(f)v\Vert^2=1.
\end{equation}
Moreover, using~\eqref{appr_eq1}, one has for arbitrary $\gamma\in\Lg(G)$ 
$$\begin{aligned}
\langle\de\pi(\gamma)\pi(f)v_\varepsilon,\pi(f)v_\varepsilon\rangle
&=\langle\pi(D^R_\gamma f)v_\varepsilon,\pi(f)v_\varepsilon\rangle \\
&=\sum_{j=1}^m\int_Z h^\varepsilon_j(\zeta)^2\langle \sigma_{\zeta}(D^R_\gamma f)\sigma_\zeta(a_j)w_\zeta, \sigma_{\zeta}(f)\sigma_\zeta(a_j)w_\zeta\rangle\de\mu(\zeta) 
\end{aligned}$$
where we also used \eqref{other_proof_eq1} and the fact that $(\supp h^\varepsilon_j)\cap(\sup h^\varepsilon_k)=\emptyset$
for $j\ne k$. 
Since the functions 
$$\zeta\mapsto \langle \sigma_{\zeta}(D^R_\gamma f)\sigma_\zeta(a_j)w_\zeta, \sigma_{\zeta}(f)\sigma_\zeta(a_j)w_\zeta\rangle
=\langle \sigma_{\zeta}((D^R_\gamma f)a_j)w_\zeta, \sigma_{\zeta}(fa_j)w_\zeta\rangle$$ 
are continuous by Lemma~\ref{disintegr}\eqref{disintegr_item2}, one obtains as above by \eqref{appr_eq1}
$$\begin{aligned}
\lim_{\varepsilon\to 0}\langle\de\pi(\gamma)\pi(f)v_\varepsilon,\pi(f)v_\varepsilon\rangle
&=\sum_{j=1}^m \langle \sigma_{\zeta_j}(D^R_\gamma f)\sigma_{\zeta_j}(a_j)w_{\zeta_j}, 
\sigma_{\zeta_j}(f)\sigma_{\zeta_j}(a_j)w_{\zeta_j}\rangle \\
&=\sum_{j=1}^m \langle \de\sigma_{\zeta_j}(\gamma)\sigma_{\zeta_j}(f)v_j, 
\sigma_{\zeta_j}(f)v_j\rangle \\
&=(\Psi_{\widetilde{\pi}_F}(\widetilde{\pi}_F(f)v))(\gamma)
\end{aligned}$$ 
where the latter equality follows by \eqref{other_proof_eq0}. 
Now, by \eqref{other_proof_eq2}, one has 
$$\lim_{\varepsilon\to 0}(\Psi_\pi(\pi(f)v_\varepsilon))(\gamma)
=(\Psi_{\widetilde{\pi}_F}(\widetilde{\pi}_F(f)v))(\gamma).$$
Since $\gamma\in\Lg(G)$ is arbitrary, this implies \eqref{other_proof_eq-1}  and completes the proof. 
\end{proof}

\begin{corollary}\label{red}
Let $G$ be any separable locally compact group. 
If the closed moment set of every irreducible representation of $G$ is convex, 
then this property is shared by every representation of $G$.  
\end{corollary}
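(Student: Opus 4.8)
The plan is to obtain this statement as an immediate consequence of Proposition~\ref{other}. Let $\pi\colon G\to U(\Hc)$ be an arbitrary representation of the separable locally compact group $G$. In order to apply Proposition~\ref{other} to $\pi$, I would verify its hypothesis: that $I_\sigma$ is convex for every irreducible representation $\sigma\colon G\to U(\Hc_\sigma)$ with $\Ker_{C^*}(\sigma)\supseteq\Ker_{C^*}(\pi)$. But any such $\sigma$ is in particular an irreducible representation of $G$, so its closed moment set $I_\sigma$ is convex by the standing assumption of the corollary. Hence the hypothesis of Proposition~\ref{other} is satisfied unconditionally, and that proposition yields the convexity of $I_\pi$.

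Since $\pi$ was an arbitrary representation of $G$, this proves that convexity of the closed moment set, assumed only for the irreducible representations, propagates to all representations, which is exactly the assertion. No genuine difficulty arises: the entire content has already been packaged into Proposition~\ref{other}, whose proof is where the separability of $G$ (through direct integral decompositions, Lemma~\ref{disintegr}, and the use of $C^*(G)$) is actually consumed. The only point that merits a brief remark is the identification, via the group $C^*$-algebra, of the irreducible unitary representations of the topological group $G$ with the irreducible $*$-representations of $C^*(G)$, so that the condition $\Ker_{C^*}(\sigma)\supseteq\Ker_{C^*}(\pi)$ is meaningful and the irreducible representations produced inside the proof of Proposition~\ref{other} are among those covered by the hypothesis; this is already implicit in the notation fixed at the start of the section, so it requires no further argument.

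If anything, the ``hard part'' is purely expository rather than mathematical: making clear that this corollary is the intended reduction step, namely that to answer Problem~\ref{probsolv} affirmatively for a separable locally compact solvable group it now suffices to treat its irreducible representations, which is what the subsequent development of the paper will do.
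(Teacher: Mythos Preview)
Your proposal is correct and matches the paper's own proof, which consists of the single line ``Use Proposition~\ref{other}.'' You have simply spelled out the trivial verification of that proposition's hypothesis, so there is nothing to add.
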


\begin{proof}
Use Proposition~\ref{other}. 
\end{proof}

\section{Main result}

It is convenient to make the following definition. 
More detailed information on solvable topological groups can be found in \cite[Chs. 7 and 10]{HM07} and 
\cite{Bos76}. 

\begin{definition}
\normalfont
A topological group $G$ is called \emph{solvable} 
if for every neighborhood $V$ of $\1\in G$ there exists an integer $k\ge 0$ with $G^{(k)}\subseteq V$. 
Here $G=G^{(0)}\supseteq G^{(1)}\supseteq\cdots$ is the descending derived series of $G$ 
defined by the condition that $G^{(k+1)}$ is the closed subgroup of $G$ 
generated by the set $\{xyx^{-1}y^{-1}\mid x,y\in G^{(k)}\}$. 
\end{definition}

\begin{theorem}\label{main}
Let $G$ be any solvable separable locally compact group.  
Then the closed moment set of every representation of $G$ is convex. 
\end{theorem}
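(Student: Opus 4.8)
The plan is to reduce everything to Theorem~\ref{conv} for finite-dimensional solvable Lie groups, the bridge being the Gleason--Yamabe structure theory of connected locally compact groups together with the reduction results established above. First I would dispose of the non-connected case: writing $G_0$ for the identity component of $G$, which is a connected solvable locally compact group, and noting that the image of every $\gamma\in\Lg(G)$ is connected so that $\Lg(G)=\Lg(G_0)$, one checks by running through Definitions~\ref{smvect}, \ref{deriv_repr} and~\ref{momdef} that for any representation $\pi\colon G\to U(\Hc)$ the space of smooth vectors $\Hc_\infty$, the operators $\de\pi(\gamma)$, and the moment map $\Psi_\pi$ computed for $\pi$ agree with those computed for $\pi|_{G_0}$; hence $I_\pi=I_{\pi|_{G_0}}$, and it suffices to treat connected $G$. (One may equally begin by invoking Corollary~\ref{red} to reduce to an irreducible $\sigma$ before restricting to $G_0$.)

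So let $G$ be connected solvable locally compact and $\pi\colon G\to U(\Hc)$ a representation. Let $\Nc_0(G)$ be the family of compact normal subgroups $N$ of $G$ with $G/N$ a finite-dimensional Lie group; by Gleason--Yamabe (\cite{HM07}, \cite{MZ55}) the members of $\Nc_0(G)$ form a neighborhood basis at $\1$, so $\Nc_0(G)$ is a filter base with $\bigcap_{N}N=\{\1\}$. For $N\in\Nc_0(G)$ set $\Hc^N:=\{v\in\Hc\mid \pi(n)v=v\ \text{for all }n\in N\}$, a closed $\pi(G)$-invariant subspace (by normality of $N$) with orthogonal projection $p_Nv:=\int_N\pi(n)v\,\de n$. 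Since $\Hc^{N_1}+\Hc^{N_2}\subseteq\Hc^{N_1\cap N_2}$ and $N_1\cap N_2\in\Nc_0(G)$, the family $\{\Hc^N\}_N$ is upward directed, and $\Vert p_Nv-v\Vert\le\sup_{n\in N}\Vert\pi(n)v-v\Vert\to 0$ gives $\lim_N p_Nv=v$ for every $v\in\Hc$. Thus $\{\Hc^N\}_N$ satisfies the hypotheses of Proposition~\ref{directed}, which reduces the claim to showing that each $I_{\pi|_{\Hc^N}}$ is convex.

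Fix $N\in\Nc_0(G)$. Since $\pi(N)$ fixes $\Hc^N$ pointwise, $\pi|_{\Hc^N}$ factors as $\pi^N\circ p$ with $p\colon G\to G/N$ the quotient homomorphism and $\pi^N$ a representation of the finite-dimensional solvable Lie group $G/N$. By Corollary~\ref{pro}, $\Hc_\infty(\pi^N)$ is dense in $\Hc_\infty(\pi^N\circ p)=\Hc_\infty(\pi|_{\Hc^N})$, so Proposition~\ref{pullback} applies to the morphism $p$ and the representation $\pi^N$; combined with the convexity of $I_{\pi^N}$ furnished by Theorem~\ref{conv}, it gives that $I_{\pi|_{\Hc^N}}=I_{\pi^N\circ p}$ is convex. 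Feeding this back into Proposition~\ref{directed} shows that $I_\pi=\overline{\bigcup_{N}I_{\pi|_{\Hc^N}}}$ is convex, being the closure of an upward-directed union of closed convex sets.

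The main obstacle is the structure-theoretic input in the connected case --- that a connected locally compact group has arbitrarily small compact normal subgroups with Lie quotient, so that the subspaces $\Hc^N$ really do exhaust $\Hc$ --- together with the verification of the density hypothesis of Proposition~\ref{pullback}, which is exactly the content of Corollary~\ref{pro} and is where local compactness enters decisively (through the Haar measure in Proposition~\ref{appr}). The reduction to $G_0$ (or, alternatively, to irreducible representations via Corollary~\ref{red}) is what makes this local structure usable: for a disconnected $G$ the family $\Nc_0(G)$ may fail to be a neighborhood basis, and then the subspaces $\Hc^N$ need not exhaust $\Hc$.
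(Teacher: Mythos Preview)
Your argument is correct, and it follows a genuinely different route from the paper's own proof. Both proofs begin by reducing to the connected component $G_0$ via $\Lg(G)=\Lg(G_0)$, and both finish by invoking Corollary~\ref{pro}, Proposition~\ref{pullback}, and the Arnal--Ludwig theorem for a representation that factors through a finite-dimensional solvable Lie quotient $G/N$. The divergence is in how one gets to such a factorization. The paper uses Corollary~\ref{red} (hence the $C^*$-algebra machinery of Section~4, with its direct-integral decompositions) to reduce to an \emph{irreducible} representation, and then appeals to \cite{Mag81} to obtain a single $N_0\in\Nc_0(G)$ through which $\pi$ factors. You instead bypass irreducibility altogether: using Gleason--Yamabe, you exhaust $\Hc$ by the fixed-point subspaces $\Hc^N$ for $N\in\Nc_0(G)$, note that each $\pi\vert_{\Hc^N}$ automatically factors through $G/N$, and feed the resulting directed family into Proposition~\ref{directed}.

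What your approach buys is substantial: it avoids both the Magnin factorization result and the entire Section~4 apparatus, and in particular it never uses separability of $G$ --- the separability hypothesis in the paper enters only through Lemma~\ref{disintegr} and Proposition~\ref{other}, which you do not need. So your argument in fact proves the theorem for arbitrary (not necessarily separable) solvable locally compact groups. The paper's route, by contrast, develops reduction tools (Propositions~\ref{half} and~\ref{other}, Corollary~\ref{red}) that are of independent interest for moment-set questions beyond the solvable case. One minor point you leave implicit: that each quotient $G/N$ is again solvable, which is what makes Theorem~\ref{conv} applicable; this is standard (and the paper cites \cite[Th.~10.18]{HM07}), but is worth stating.
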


\begin{proof} 
We may assume that $G$ is connected, since $\Lg(G)$ depends only on the connected $\1$-component of $G$.
As Corollary~\ref{red} shows, it suffices to prove that for every
irreducible representation $\pi\colon G\to U(\Hc)$ its closed moment set $I_\pi$ is convex. 
Using  \cite[Cor. to Lemma 1]{Mag81}, 
there exists $N_0\in\Nc_0(G)$ with $\pi=\pi_0\circ p$, 
where we use notation from Corollary~\ref{pro}. 
By that corollary, $\Hc_\infty(\pi_0)$ is dense in $\Hc_\infty(\pi)$, 
hence Proposition~\ref{pullback} shows that 
if we check that $I_{\pi_0}$ is convex, then $I_\pi$ is convex as well.  
But $I_{\pi_0}$ is a convex set by \cite[Th. 13]{AL92},  
since $\pi_0\colon G/N_0\to U(\Hc)$ is a representation of 
a connected finite-dimensional Lie group which is solvable by \cite[Th. 10.18]{HM07}.  
This completes the proof. 
\end{proof}

\subsection*{Acknowledgment}
This research was partially supported from the Grant of the Romanian National Authority for Scientific Research, CNCS-UEFISCDI, project number PN-II-ID-PCE-2011-3-0131.

\end{document}